\setlist{nosep} %
\newtheorem{mydef}{Definition}
\newtheorem{lem}{Lemma}
\newtheorem{prop}{Proposition}
\newtheorem{prob}{Problem}
\newtheorem{thm}{Theorem}
\newtheorem{assum}{Assumption}
\newcommand{\Nint}[1]{[ #1]}
\newcommand{\diam}{\mathrm{diam}}
\newcommand{\argmin}{\operatorname{arg\ min}}
\newcommand{\arginf}{\operatorname{arg\ inf}}
\newcommand{\Npos}{\mathbb{N}_{+}}
\newcommand{\ftsc}{f_{t,\mu}^-} %
\newcommand{\Htsc}{H_{t,\mu}^-} %
\newcommand{\xetaplusminusdelta}{x_{\eta^-,\eta^+,\delta}}
\newcommand{\xopt}{x^\ast}
\newcommand{\xoptdelta}{x_{\delta}^\ast}
\newcommand{\xddagglobal}{x^\ddag_{\mathrm{global}}}
\newcommand{\Deltaglobal}{\Delta_{\mathrm{global}}}
\newcommand{\Gfmax}{{\|\nabla f\|}_{\mathrm{max}}}
\newcommand{\GJmax}{{\|\nabla J\|}_{\mathrm{max}}}
\newcommand{\GHmax}{{\|\nabla H\|}_{\mathrm{max}}}
\newcommand{\LH}{L_H}
\newcommand{\FH}{\mathscr{F}_{L_H}}
\newcommand{\FHmu}{\mathscr{F}_{\LH}^{\mu}}
\newcommand{\FJ}{\mathscr{F}_{L_J}}
\newcommand{\FK}{\mathscr{F}_{K}}
\newcommand{\FKJ}{\mathscr{F}_{K_J}}
\newcommand{\Ff}{\mathscr{F}_{L_f}}
\newcommand{\Ffmu}{\mathscr{F}_{L_f}^\mu}
\newcommand{\Tsuff}{T_\mathrm{sufficient}}
\newcommand{\Tsuffcov}{T}
\newcommand{\Tsuffsc}{T_{\mathrm{sufficient},\mu\text{-}\mathrm{convex}}}
\newcommand{\datapointsi}{J(q_k), H(q_k),
\nabla J(q_k), \nabla H(q_k)}
\newcommand{\mycovmethodqi}{{\{q_i: i\in\Nint{t+1},
H(q_i)\leq \delta\}}}
\title{Constrained, Global Optimization of Functions
with Lipschitz Continuous Gradients}
\author{Abraham P. Vinod, Arie Israel, and Ufuk
Topcu\thanks{A. Vinod is with Mitsubishi Electric Research
    Laboratories (MERL), Cambridge, MA, 02139, USA. A.
    Israel is with the Department of Mathematics, and U.
    Topcu is with the Department of Aerospace Engineering
    and Engineering Mechanics at the University of Texas at
    Austin, Austin, TX, 78712 USA; email:
    aby.vinod@gmail.com, arie@math.utexas.edu,
    utopcu@utexas.edu.\newline This work was completed while
    Vinod was a postdoctoral research fellow at the
    University of Texas at Austin.}}
\date{}
\begin{document}

\maketitle
\begin{abstract}
    We present two first-order, sequential optimization algorithms to solve constrained optimization problems. We consider a black-box setting with a priori unknown, non-convex objective and constraint functions that have Lipschitz continuous gradients. The proposed algorithms balance the exploration of the a priori unknown feasible space with the pursuit of global optimality within in a pre-specified finite number of first-order oracle calls. The first algorithm accommodates an infeasible start, and provides either a near-optimal global solution or establishes infeasibility. However, the algorithm may produce infeasible iterates during the search. For a strongly-convex constraint function and a feasible initial solution guess, the second algorithm returns a near-optimal global solution without any constraint violation. In contrast to existing methods, both of the algorithms also compute global suboptimality bounds at every iteration. They can satisfy user-specified tolerances in the computed solution with near-optimal complexity in oracle calls for a large class of optimization problems. We propose tractable implementations of the algorithms by exploiting the structure afforded by the Lipschitz continuous gradient property.

\end{abstract}
\section{Introduction}

We study first-order methods to solve the following
constrained, global optimization problem,
\begin{align}
    \mathrm{minimize}\ J(x)\quad\mathrm{subject\
        to}\ H(x)\leq 0\label{prob:orig_prob},
\end{align}
where the functions $J,H: \mathbb{R}^d \to \mathbb{R}$ are
(possibly non-convex) functions with Lipschitz continuous
gradients. We consider the black-box setting, where the 
functions $J$ and $H$ are \emph{a priori} unknown, and are 
accessible only via first-order oracles. 
We denote a global minimum of \eqref{prob:orig_prob} by
$\xopt$.  We propose two sequential optimization algorithms
that approximate $\xopt$ in a finite number of oracle
queries. Unlike existing methods, the algorithms provide
global suboptimality bounds at every iteration which enable
early termination, and the algorithms are worst-case optimal
in the budget of the oracle  calls required to achieve
user-specified tolerances for a large class of problems.

Constrained, global optimization problems of the form
\eqref{prob:orig_prob} are ubiquitous in science and
engineering. An application of \eqref{prob:orig_prob} in
machine learning arises in policy optimization for
reinforcement learning. Here, we maximize a long-term reward
associated with the learning problem by optimizing a
parameterized policy, typically a neural
network~\cite{sutton2018reinforcement}. The constraint $H$ 
in such problems can impose additional desirable properties 
or domain-specific knowledge on the policy network. As an
illustration, we train a neural network to solve the
mountain car problem~\cite{openaigym} in
Section~\ref{sub:mountain}, and demonstrate that
imposing minimum energy requirements on the closed-loop
system allows completion of the task with very few
simulations.

A significant part of existing research on global
optimization focuses on a special case of
\eqref{prob:orig_prob}~\cite{horst2000introduction,pardalos2010deterministic,kochenderfer2019algorithms},
\begin{align}
\mathrm{minimize}\ J(x)\quad \mathrm{subject\ to}\ x \in
\mathcal{X}\label{prob:orig_prob_noH}.
\end{align}
where $ \mathcal{X}\subset
\mathbb{R}^d$ is a known, convex, and compact set.
A popular approach to tackle \eqref{prob:orig_prob_noH} in a
black-box setting is via iterative optimization of a surrogate optimization
problem, constructed using the regularity of $J$ and the
information from the past oracle queries. For example, see Piyavskii-Shubert
algorithm~\cite{piyavskii_algorithm_1972,shubert1972sequential}
and \texttt{DIRECT}~\cite{jones1993lipschitzian} for
\emph{Lipschitz continuous} $J$, and covering
methods~\cite{blanquero_covering_2000,
fortin_piecewise_2011,pardalos2010deterministic} for $J$ with \emph{Lipschitz
continuous gradients}. %
Alternatively, researchers have utilized hierarchical
partitioning of $ \mathcal{X}$ to design optimistic
optimization algorithms~\cite{munos2011optimistic} that solve
\eqref{prob:orig_prob_noH}. The Piyavskii-Shubert algorithm,
the covering methods, and the optimistic optimization
algorithms have deterministic bounds on the global suboptimality 
for a given budget of oracle 
calls~\cite{ivanov1972optimal,munos2011optimistic}.
On the other hand, random search algorithms utilize Lipschitz
information to provide probabilistic budget-dependent bounds
on suboptimality~\cite{malherbe2017global}. In this paper,
we develop novel techniques to perform constrained, global
optimization of \eqref{prob:orig_prob} in a
black-box setting, inspired by the covering methods.

Bayesian optimization is another popular sequential optimization approach for black-box
optimization~\cite{eriksson2020scalable,gardner2014bayesian,gelbart2014bayesian,mockus2012bayesian,snoek2012practical,Wu2017GPgrad}.
It implicitly imposes regularity
requirements on the \emph{a priori} unknown objective and
constraint functions by modeling them as samples drawn from
a fixed Gaussian processes~\cite{mockus2012bayesian,rasmussen2006gaussian}.
At every iteration, it constructs an \emph{acquisition
function} using the data from past queries, and solves a
surrogate optimization problem to identify the next query
point. For the optimization problem
\eqref{prob:orig_prob_noH}, existing literature provides
budget-dependent, probabilistic-suboptimality bounds on the
estimated optimum via \emph{regret
bounds}~\cite{FreitasExpoential2012,srinivas2012information}.
For the constrained optimization problem
\eqref{prob:orig_prob}, the acquisition function is
multiplied with another surrogate function, which models the
\emph{probability of feasibility}~\cite{eriksson2020scalable,gardner2014bayesian,gelbart2014bayesian}.
To the best of our knowledge, such approaches 
do not have any convergence guarantees or budget-dependent global 
suboptimality bounds guarantees. The main advantages of the
proposed algorithms presented here over Bayesian
optimization techniques are as follows: 1) global suboptimality bounds available from the first feasible iteration, 2) sufficient budgets for the algorithm to
achieve user-specified solution tolerances or demonstrate near-infeasibility, and
3) constraint-violation-free optimization of
\eqref{prob:orig_prob}, when $H$ is additionally known to be strongly-convex.
Similar to Bayesian optimization problem, the proposed
algorithms also solve
non-convex surrogate optimization problems. However, due to
the structure afforded by Lipschitz gradient continuity, the
resulting problems are simpler non-convex, quadratically constrained,
quadratic programs, that can be efficiently handled using
existing off-the-shelf solvers, like \texttt{GUROBI}.

The main contributions of this paper are \emph{two first-order, sequential optimization
algorithms that approximate the global minimum of \eqref{prob:orig_prob} with valid global
suboptimality bounds under a finite budget of oracle calls.} Starting with a (possibly
infeasible) initial solution guess, the first algorithm
approximates $\xopt$ or proves the (near-)infeasibility of
\eqref{prob:orig_prob}. The first algorithm does not require
the initial solution guess to be feasible for
\eqref{prob:orig_prob}. In contrast, the second algorithm
solves \eqref{prob:orig_prob} without any constraint
violation, when the constraint function $H$ is
strongly-convex and the initial solution guess is feasible
for \eqref{prob:orig_prob}. Both of the algorithms are
\emph{anytime}, i.e., they can be terminated at any point of
time to return a valid approximation of $\xopt$ with global
suboptimality bound, or a near-infeasibility certificate. We
also characterize worst-case, sufficient budgets of oracle
calls for the algorithms to achieve a user-specified,
global-suboptimality bounds, and show that they are tight up
to a constant factor for a large class of problems.

The rest of this paper is organized as follows.
Section~\ref{sec:prelim} states the problems of interest,
and provides a brief description of mathematical concepts
and existing work relevant to solve \eqref{prob:orig_prob}.
Section~\ref{sec:main} provides the main results of this
paper --- two algorithms to solve \eqref{prob:orig_prob}
along with the proofs of correctness and a discussion about
their implementation. We investigate the efficacy of the
proposed algorithms in numerical experiments in
Section~\ref{sec:num}, and conclude in
Section~\ref{sec:conc}.

\section{Setup and preliminaries}
\label{sec:prelim}

We denote the set of natural and real numbers by
$\mathbb{N}$ and $ \mathbb{R}$ respectively, the
set of natural numbers (not including zero) by $\Npos$, and
the set of non-zero natural numbers up to $t\in\Npos$ by
$\Nint{t}=\{1,2,\ldots, t\}$. For any set
$ \mathcal{S}$, $ \mathcal{S}^d$ refers to the Cartesian
product of $ \mathcal{S}$ with itself $d$-times. We denote
the cardinality of a finite set $ \mathcal{S}$ by $|
\mathcal{S}|$, and the absolute value of a scalar $x \in
\mathbb{R}$ by $|x|$. We use $\|x\|$ to denote the Euclidean 
norm of a vector $x\in\mathbb{R}^d$, and denote the inner between two vectors $x,y$ by $x\cdot y$. Given a compact set 
$\mathcal{X}\subset \mathbb{R}^d$, we define its diameter as
$ \diam( \mathcal{X}) =\sup_{y,x\in \mathcal{X}} \|y -x\|$.
The first-order approximation $\ell:
\mathbb{R}^d \to \mathbb{R}$ of a
continuously-differentiable function $f: \mathcal{X} \to
\mathbb{R}$ about a point $q\in \mathcal{X}$ is given by
\begin{align}
    \ell(x;q,f)&\triangleq f(q) + \nabla f(q)\cdot(x -
    q)\label{eq:first_order}.
\end{align}
Let $\Gfmax$ denote the finite upper bound on $\|\nabla f\|$
over a compact $ \mathcal{X}$,
\begin{align}
    \Gfmax\triangleq\sup_{x\in \mathcal{X}}
    \|\nabla f(x)\|<\infty.\label{eq:GXmax}
\end{align}
We also recall that for any
differentiable $f: \mathcal{X} \to \mathbb{R}$ and any point
$x,y\in \mathcal{X}$, there exists a point $z$ on the line
joining $x$ and $y$, such that
\begin{align}
    |f(y) - f(x)|&= \nabla f(z) \cdot (y - x) \leq
    \|\nabla f(z)\|\|y-x\|\leq
    \Gfmax\|y-x\|\label{eq:mvt_cs}.
\end{align}
Equation \eqref{eq:mvt_cs} follows from mean
value theorem and Cauchy-Schwartz inequality.

\emph{Lipschitz continuous
gradient}~\cite{nesterov_lectures_2018}: Given a set $
\mathcal{X}\subset \mathbb{R}^d$, a 
continuously-differentiable function $f: \mathcal{X} \to
\mathbb{R}$ has a \emph{Lipschitz continuous gradient}, if
its gradient $\nabla f$ satisfies the property $\|\nabla
f(y) - \nabla f(x)\|\leq K_f\|y - x\|$ for every $x,y \in
\mathcal{X}$ for the smallest constant $K_f \in \mathbb{R},\
K_f\geq 0$. We define a \emph{Lipschitz gradient constant}
$L_f$ as any known upper bound on $K_f$, 
since $K_f$ is rarely known. We denote the family of
functions $f$ with Lipschitz gradient constant $L_f$ by $
\mathscr{F}_{L_f}$. 
For brevity, we will refer to functions with Lipschitz
continuous gradients as \emph{smooth functions}.

\emph{Strong-convexity
($\mu$-convexity)}~\cite{nesterov_lectures_2018}: Given a
set $ \mathcal{X}\subset \mathbb{R}^d$, a 
continuously-differentiable function $f: \mathcal{X} \to
\mathbb{R}$ is \emph{strongly-convex} or
\emph{$\mu$-convex}, if for any $x,y\in \mathcal{X}$,
\begin{align}
    f(y)&\geq f(x) + \nabla f(x) \cdot (y-x) + \frac{\mu}{2}
    \|y - x\|^2\label{eq:strong_cvx},
\end{align}
for some \emph{convexity
constant} $\mu>0$. Similarly to the Lipschitz gradient
constant, we do not require $\mu$ to be the largest positive
scalar satisfying \eqref{eq:strong_cvx} for every $x,y\in
\mathcal{X}$.  When $f$ is also smooth with Lipschitz
gradient constant $L_f$, then $\mu \leq L_f$.  We use
$\Ffmu\subset\Ff$ to denote the family of $\mu$-convex, 
$L_f$-smooth functions.

\subsection{Sequential optimization algorithms}
\label{sub:seqoptalg}

Sequential optimization algorithms are popular due
to their ease in design, implementation, and analysis. In this paper,
we study \emph{first-order, sequential optimization algorithms}
to solve \eqref{prob:orig_prob}.
\begin{mydef}{\textsc{(First-order, sequential optimization algorithm)}}
    Given an initial solution guess $q_1\in \mathcal{X}$ and
    a budget $T\in \Npos$ of oracle calls, a
    \emph{first-order, sequential optimization algorithm} is
    a procedure that generates a sequence of query points
    $\{q_t\}_{t=2}^T$ for the first-order oracles for $J$
    and $H$. At every
    iteration $t\in[T-1]$, the
    algorithm constructs $q_{t+1}$ using the information
    available until then 
    ${\{\datapointsi\}}$ with ${k\in\Nint{t}}$. The algorithm computes
    $\xopt$ (or an approximation) within $T$ iterations.
\end{mydef}
Examples of first-order, sequential optimization algorithms
include gradient descent and sequential quadratic
programming~\cite{pmlr-v28-jaggi13,nesterov_lectures_2018,nocedal_numerical_2006}.

Unfortunately, due to the richness of the family of
smooth functions, even the
computation of a feasible solution to
\eqref{prob:orig_prob}
using \emph{any} first-order, sequential optimization algorithm can be
arbitrarily difficult under a fixed budget of oracle calls.
See Appendix~\ref{app:adversarial_example} for such an
``adversarial'' example. Therefore, we will focus on the
computation of an $(\eta,\eta,\delta)$-minimum of
\eqref{prob:orig_prob} or proving
$\gamma$-infeasibility (near-infeasibility for small
$\gamma$).
\begin{mydef}[\textsc{$(\eta^-,\eta^+,\delta)$-minimum and $(\eta,\delta)$-minimum of
\eqref{prob:orig_prob}}]
    Given $\eta^-,\eta^+,\delta\geq 0$, a solution
    $\xetaplusminusdelta\in \mathcal{X}$ is an
    $(\eta^-,\eta^+,\delta)$-minimum  of
    \eqref{prob:orig_prob}, provided
    \begin{align}
        -\eta^- \leq J(\xetaplusminusdelta) - J(\xopt)
        &\leq \eta^+\ \mbox{ and }\
        H(\xetaplusminusdelta)\leq
        \delta\label{eq:desired_conditions},
    \end{align}
    The solution $\xetaplusminusdelta$ is the $(\eta,\delta)$-minimum of
    \eqref{prob:orig_prob}, when $\eta^-=\eta^+=\eta$.
\end{mydef}
\begin{mydef}[\textsc{$\gamma$-infeasibility of
    \eqref{prob:orig_prob}}]\label{defn:gamma_infeas}
    We declare \eqref{prob:orig_prob} to be
    $\gamma$-infeasible for some $\gamma\geq0$, when the
    following optimization is infeasible,
    \begin{align}
        \mathrm{minimize}\ J(x)\quad\mathrm{subject\
            to}\ H(x)<
            -\gamma\label{prob:orig_prob_multiple_infeasibility}.
    \end{align}
\end{mydef}

\subsection{Problem statements}

To ensure that \eqref{prob:orig_prob} does not
have an unbounded solution, we make the following standing
assumption throughout the paper.

\begin{assum}[\textsc{Feasible space of
    \eqref{prob:orig_prob} lies inside a known, convex and
compact set}]\label{assum:X}
    We assume the knowledge of a convex and compact set $
    \mathcal{X}\neq \emptyset$ that contains the \emph{a
    priori} unknown feasible set $\{H\leq 0\}$ of
    \eqref{prob:orig_prob}.
\end{assum}
When the constraint set $\{H\leq 0\}$ is unbounded, we will
seek the (local) minimum of \eqref{prob:orig_prob} inside
the set $\mathcal{X}\cap \{H\leq 0\}$.

Apart from the knowledge of $
\mathcal{X}$, we will assume access to the following to
solve \eqref{prob:orig_prob}:
        1) the first-order oracles for the \emph{a
        priori} unknown functions $J$ and $H$
        that provide $(J(q),\nabla J(q))$ and ${(H(q),\nabla
        H(q))}$ at any query point $q\in \mathcal{X}$
        respectively,
        2) Lipschitz gradient constants $L_J$ and $L_H$
        for $J$ and $H$ over $ \mathcal{X}$ respectively, and
        3) an initial solution guess $q_1\in \mathcal{X}$
        that may be infeasible for \eqref{prob:orig_prob}.
We now state the two problems of interest.

\renewcommand{\theprob}{\Alph{prob}}
\begin{prob}[\textsc{Global optimization for smooth $H$}]\label{p_st:algo}
    Given a budget $T\in\Npos$ of oracle calls and a
    relaxation threshold $\delta > 0$, design a
    first-order, sequential optimization algorithm that
    either declares \eqref{prob:orig_prob} to be
    $\gamma$-infeasible for some $\gamma > 0$, or computes a
    $(\Deltaglobal,\delta)$-minimum of
    \eqref{prob:orig_prob} for some
    $\Deltaglobal>0$. Also, 
    given a
    global-suboptimality threshold $\eta > 0$,
    characterize
    the budget $\Tsuff$ of oracle calls needed by the
    algorithm to compute an
    $(\eta,\delta)$-minimum (when it exists) or declare
    \eqref{prob:orig_prob} to be infeasible,
    irrespective of the choice of $J$, $H$, and $q_1$.
\end{prob}
We also consider a special case of Problem~\ref{p_st:algo}, where 
the \emph{a priori} unknown constraint function $H$ is strongly-convex, and 
the initial solution guess $q_1\in \mathcal{X}$ is feasible for \eqref{prob:orig_prob}.
Here, we assume that $H\in\FHmu$ with known constants $L_H\geq \mu > 0$.
Problem~\ref{p_st:algo_safe} 
searches for the global minimum of \eqref{prob:orig_prob}, without
violating the \emph{a priori} unknown constraint $H\leq
0$ in \eqref{prob:orig_prob}. 
\begin{prob}[\textsc{Global optimization for smooth,
    strongly-convex $H$}]\label{p_st:algo_safe} Given a budget
    $T\in\Npos$ of oracle calls, $H\in\FHmu$, and a feasible 
    initial solution guess, design a
    first-order, sequential optimization algorithm that
    computes a $(0,\Deltaglobal,0)$-minimum of
    \eqref{prob:orig_prob} for some
    $\Deltaglobal>0$ without violating the constraint
    $H\leq 0$ at any iteration. Also, characterize the
    budget $\Tsuffsc$ of oracle calls needed by the
    algorithm to compute an $(0,\eta,0)$-minimum,
    irrespective of the choice of $J$, $H$, and $q_1$.
\end{prob}

\subsection{Data-driven approximants for smooth functions} 
\label{sub:minorant_majorant}

\begin{wrapfigure}{r}{.45\textwidth}
    \definecolor{pydarkgreen}{RGB}{0, 100, 0}
    \definecolor{pydarkorange}{RGB}{255, 140, 0}
    \centering
    \includegraphics[width=0.99\linewidth]{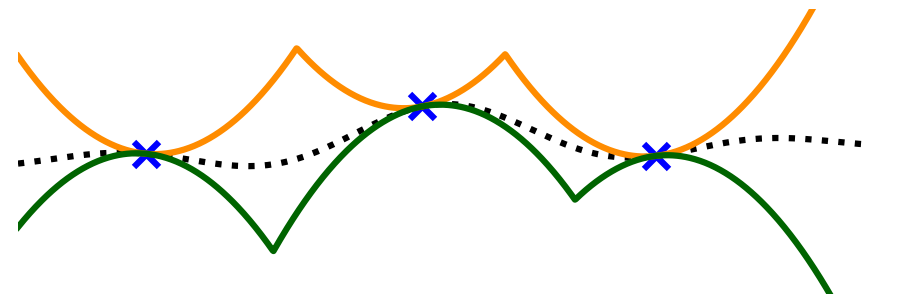}
    \begin{tikzpicture}[overlay,remember picture]
        \small
        \node[anchor=west, color=pydarkorange] (fp) at
            (2.6,2.4) {$\boldsymbol{f_3^+}$};
        \node[anchor=west] (f) at (2.6,1.7)
            {$\boldsymbol{f}$};
        \node[anchor=west, color=pydarkgreen] (fn) at
            (2.6,0.5) {$\boldsymbol{f_3^-}$};
        \node[anchor=west, color=blue] (q1) at (-2.3,1.7)
            {$q_1$};
        \node[anchor=west, color=blue] (q2) at (-0.4,2)
            {$q_2$};
        \node[anchor=west, color=blue] (q3) at (1.2,1.65)
            {$q_3$};
    \end{tikzpicture}
    \caption{Approximants for $f\in \Ff$}\label{fig:bounds}
    \vspace*{-1.5em}
\end{wrapfigure}
 
For any function $f:
\mathcal{X}\to \mathbb{R}$, we define its \emph{minorants}
and \emph{majorants} as functions ${f^-,f^+: \mathcal{X} \to
\mathbb{R}}$ respectively,
\begin{align}
    f^-(x)&\leq f(x)\leq f^+(x),&\forall x\in \mathcal{X}.
\end{align}
Lemma~\ref{lem:approx} constructs majorants and minorants of
a function $f\in\Ff$ using data as shown in
Figure~\ref{fig:bounds}.

\begin{lem}[\textsc{Majorant and minorant for $f$}]
\label{lem:approx} 
    Consider a function $f: \mathcal{X} \to \mathbb{R}$,
    where $f\in \Ff$. Given $t \in \Npos$ and data $\{(q_i,
    f(q_i), \nabla f(q_i)\}_{i=1}^{t}$, a majorant and
    minorant of $f$ is given by,
\begin{align}
    f^+_t(x) &=\min\limits_{i\in \Nint{t}}\left(\ell(x;q_i,f)+\frac{L_f}{2} \| x -
    q_i\|^2\right),\ \mbox{ and }\ \label{eq:major_f}\\
        f^-_t(x) &=\max\limits_{i\in \Nint{t}}\left(\ell(x;q_i,f)-\frac{L_f}{2} \| x -
   q_i\|^2\right), \label{eq:minor_f}%
\end{align}
    respectively. Furthermore, $f^+_t(q_i)=f^-_t(q_i)=f(q)$
    for every $i\in \Nint{t}$, and the approximation errors
    $f^+_t(x)-f(x)$ and $f(x)-f^-_t(x)$ lie in a bounded
    interval $[0, \min\limits_{i\in \Nint{t}} L_f \| x -
    q_i\|^2]$.
\end{lem}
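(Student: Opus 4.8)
The plan is to reduce everything to the standard two-sided quadratic bound enjoyed by any $f\in\Ff$: for all $q,x\in\mathcal{X}$,
\[
\ell(x;q,f)-\frac{L_f}{2}\|x-q\|^2 \;\leq\; f(x)\;\leq\; \ell(x;q,f)+\frac{L_f}{2}\|x-q\|^2.
\]
I would prove this by the standard integral identity $f(x)-\ell(x;q,f)=\int_0^1\big(\nabla f(q+s(x-q))-\nabla f(q)\big)\cdot(x-q)\,ds$, bounding the integrand in absolute value by $\|\nabla f(q+s(x-q))-\nabla f(q)\|\,\|x-q\|\leq L_f\,s\,\|x-q\|^2$ via Cauchy--Schwartz and the Lipschitz-gradient property, and then using $\int_0^1 s\,ds=\tfrac12$. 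This single analytic step delivers both the majorizing and minorizing quadratic simultaneously; the remainder of the argument is bookkeeping over the finite index set $\Nint{t}$.

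Next I would promote these per-point bounds to the proposed approximants. Because $f(x)\leq \ell(x;q_i,f)+\frac{L_f}{2}\|x-q_i\|^2$ holds for every $i\in\Nint{t}$, the pointwise minimum over $i$ is still an upper bound, so $f(x)\leq f^+_t(x)$; dually, the pointwise maximum of the lower bounds gives $f^-_t(x)\leq f(x)$. For the interpolation claim, evaluating the $j$-th term of $f^+_t$ at $x=q_j$ collapses to $f(q_j)$, since both the linear correction $\nabla f(q_j)\cdot(q_j-q_j)$ and the quadratic term vanish, while every other term is at least $f(q_j)$ by the majorant inequality just established; hence the minimum defining $f^+_t(q_j)$ equals $f(q_j)$, and symmetrically $f^-_t(q_j)=f(q_j)$.

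Finally, for the error bounds, non-negativity of $f^+_t(x)-f(x)$ and $f(x)-f^-_t(x)$ is exactly the majorant/minorant property. For the upper end I would combine the two one-sided bounds at a common point $q_i$: from $f(x)\geq \ell(x;q_i,f)-\frac{L_f}{2}\|x-q_i\|^2$ it follows that the $i$-th majorant term exceeds $f(x)$ by at most $L_f\|x-q_i\|^2$. Since subtracting the constant $f(x)$ commutes with the minimum, taking the minimum over $i\in\Nint{t}$ yields $f^+_t(x)-f(x)\leq \min_{i\in\Nint{t}}L_f\|x-q_i\|^2$, and an identical computation bounds $f(x)-f^-_t(x)$. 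The one place demanding care is the opening quadratic sandwich: the mean-value estimate \eqref{eq:mvt_cs} cited earlier only controls $f$ through the gradient bound $\Gfmax$ and is too coarse to produce the $\tfrac{L_f}{2}\|x-q\|^2$ term, so the integral (or an equivalent second-order Taylor-with-remainder) argument is genuinely needed; everything downstream of it is routine.
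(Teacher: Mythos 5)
Your proposal is correct and follows essentially the same route as the paper's proof: establish the two-sided quadratic bounds at each $q_i$, pass to the finite $\min$/$\max$ for the majorant/minorant property, observe interpolation at the query points, and cross the two one-sided bounds at a common $q_i$ before taking the minimum to get the $\min_{i\in\Nint{t}} L_f\|x-q_i\|^2$ error bound. The only difference is that you additionally supply the integral-remainder derivation of the basic quadratic sandwich (and correctly note that \eqref{eq:mvt_cs} is too coarse for this), whereas the paper simply asserts those standard inequalities for $f\in\Ff$.
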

\begin{proof}
    For any smooth function $f$, the following inequalities
    hold for any $x \in \mathcal{X}$,
    \begin{subequations}
        \begin{align}
            f(x) &\leq \ell(x;q_i,f) +
            \frac{L_f}{2}\|x - q_i\|^2,\qquad \forall i\in\Nint{t}
            \label{eq:approx_major_interim},\\
            f(x) &\geq \ell(x;q_i,f)
            - \frac{L_f}{2}\|x - q_i\|^2,\qquad \forall i\in\Nint{t}
            \label{eq:approx_minor_interim}.
        \end{align}%
    \end{subequations}%
    We obtain the data-driven majorant $f^+$
    \eqref{eq:major_f} and minorant $f^-$ \eqref{eq:minor_f}
    via finite minimum of \eqref{eq:approx_major_interim}
    and finite maximum of \eqref{eq:approx_minor_interim}
    over $i\in\Nint{t}$ respectively. By construction, these
    piecewise-quadratic functions coincide with $f(q_i)$ at
    $x=q_i$ for every $i\in\Nint{t}$. Also,
    \begin{align}
        \eqref{eq:approx_major_interim} &\Rightarrow  f(x) - \left({\ell(x;q_i, f) -
    \frac{L_f}{2}\|x - q_i\|^2}\right)\leq L_f\|x -
        q_i\|^2\label{eq:approx_minor_interim2},\\
        \eqref{eq:approx_minor_interim} &\Rightarrow
        \left({\ell(x;q_i, f) +
        \frac{L_f}{2}\|x - q_i\|^2}\right) - f(x)\leq L_f\|x -
        q_i\|^2\label{eq:approx_major_interim2}.
    \end{align}
    We obtain an upper bound on the approximation errors $f
    - f_t^-$ and $f^+_t - f$ by computing the finite minimum
    of \eqref{eq:approx_minor_interim2} and
    \eqref{eq:approx_major_interim2} over $i\in\Nint{t}$.
\end{proof}

Using \eqref{eq:strong_cvx}, a $\mu$-convex $f$ admits a
tighter, piecewise-quadratic, data-driven
minorant $\ftsc: \mathcal{X} \to \mathbb{R}$,
\begin{align}
    \ftsc(x)&=\max_{i\in\Nint{t}} \left(\ell(x;q_i, f) + \frac{\mu}{2}\|x -
q_i\|^2\right)\label{eq:approx_sc},
\end{align}
with $f_t^-\leq \ftsc \leq f$.

\subsection{Covering method for global optimization of
\eqref{prob:orig_prob_noH}}
\label{sub:cov_no_H}

We briefly discuss how covering method solves $\inf_{x\in
\mathcal{X}} J(x)$, which motivates the proposed algorithms. 
\begin{algorithm}%
\caption{Covering method for global optimization of
\eqref{prob:orig_prob_noH}~\cite{blanquero_covering_2000}} 
    \label{algo:cov_method}
\KwIn{Convex \& compact set $ \mathcal{X}\subset
    \mathbb{R}^d$, first-order oracle for $J$,
    initial point $q_1\in \mathcal{X}$, Lipschitz
    gradient constant $L_J$, suboptimality threshold
$\eta > 0$}
    \KwOut{Near-global minima of \eqref{prob:orig_prob_noH} with
    suboptimality bound}

    Initialize $\xddagglobal \gets q_1$ and $\Deltaglobal \gets
    \infty$, and query the first-order oracle at $q_{1}$

    \For{$t=1,2,3,\ldots$}
    {
        Construct $J_t^-$ using \eqref{eq:minor_f}

        Solve the following optimization problem to compute
        $q_{t+1}$ and $J_t^-(q_{t+1})$
        \begin{align}
            q_{t+1}\gets\arginf\nolimits_{x}\ J_t^-(x)\ \
            \text{subject to}\ x\in
            \mathcal{X}\label{prob:cov_method}
        \end{align}

        Query the first-order oracle at $q_{t+1}$

        Update the near-global minima estimate and
        suboptimality bound:
        \begin{align}
            \xddagglobal&\gets\argmin\nolimits_{\{q_i:
            i\in\Nint{t+1}\}}
            J(q_i)\label{eq:cov_method_min}\\
            \Deltaglobal&\gets\hspace*{1.8em}
            \min\nolimits_{\{q_i: i\in\Nint{t+1}\}}
            J(q_i)-J_t^-(q_{t+1})\label{eq:cov_method_subopt}
        \end{align}

        \textbf{if }$\Deltaglobal \leq \eta$\textbf{ then break
        }\hfill\emph{$\triangleright$ Terminate,
        if acceptable global suboptimality bound}

    }

    \KwRet ($\xddagglobal,\
    \Deltaglobal$)\hfill\emph{$\triangleright\ \Deltaglobal <\infty$ at the
    end of first iteration}
\end{algorithm}

Algorithm~\ref{algo:cov_method} computes $\xddagglobal\in
\mathcal{X}$ and $\Deltaglobal \geq 0$ such that 
\begin{align}
    0\leq J(\xddagglobal) - J(\xopt)\leq
    \Deltaglobal.\label{eq:cov_prob_desirata}
\end{align}
At each iteration of Algorithm~\ref{algo:cov_method}, the
optimization problem \eqref{prob:cov_method} is feasible and
has a finite optimal solution since $\mathcal{X}$ is compact
and non-empty.  Furthermore, $\xddagglobal$ satisfies
\eqref{eq:cov_prob_desirata} at every iteration $t\in\Npos$, since
\begin{align}
    J_t^-(q_{t+1})\leq J_t^-(\xopt)\leq J(\xopt)\leq
J(\xddagglobal) \triangleq \min_{i\in\Nint{t+1}}
J(q_i).\label{eq:cov_prob_ineq}
\end{align} 
Equation \eqref{eq:cov_prob_ineq} follows from
\eqref{eq:minor_f}, \eqref{prob:cov_method}, and the fact
that $\xddagglobal$ is always feasible for
\eqref{prob:orig_prob_noH}.
See~\cite{blanquero_covering_2000,fortin_piecewise_2011,pardalos2010deterministic}
for more details.

To characterize the upper limit on the number of iterations required to ensure
that the global-suboptimality bound is below $\eta$, we
first recall Lemma~\ref{lem:pidgeonhole}, which follows from
the \emph{pidgeonhole principle}. 

\begin{lem}\label{lem:pidgeonhole}
\newcommand{\lemPidgeonHole}{ For any $\epsilon > 0$, any
    convex and compact set $ \mathcal{X} \subset
    \mathbb{R}^d$, and any finite collection of $T\geq
    \left\lceil{\left({\diam(\mathcal{X})\sqrt{d}}\right)}^d\epsilon^{\frac{-d}{2}}\right\rceil
    + 1$ distinct points $q_i\in \mathcal{X}$ for every
    $i\in \Nint{T}$, there exists $t\in \Nint{T-1}$ such
    that $\min_{i\in\Nint{t}}\|q_{t+1} - q_i\|^2\leq
\epsilon$.}\lemPidgeonHole{}
\end{lem}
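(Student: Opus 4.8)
The plan is to argue by contraposition via the pigeonhole principle applied to a partition of $\mathcal{X}$ into small cells. The first useful observation is that the conclusion has a convenient equivalent form: the existence of some $t\in\Nint{T-1}$ with $\min_{i\in\Nint{t}}\|q_{t+1}-q_i\|^2\leq\epsilon$ \emph{fails} precisely when $\|q_i-q_j\|^2>\epsilon$ for every pair $i<j$ (indeed, given such a pair, taking $t=j-1$ would witness the conclusion). Hence it suffices to show that a collection of $T\geq\left\lceil{(\diam(\mathcal{X})\sqrt{d})}^d\epsilon^{-d/2}\right\rceil+1$ distinct points in $\mathcal{X}$ cannot be pairwise separated by more than $\sqrt{\epsilon}$.

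To bound the cardinality of any such $\sqrt{\epsilon}$-separated set, I would first enclose $\mathcal{X}$ in an axis-aligned cube of side $\diam(\mathcal{X})$: since the projection of $\mathcal{X}$ onto each coordinate axis has length at most $\diam(\mathcal{X})$ (the extent along an axis is bounded by the diameter), the set lies inside a product of $d$ intervals each of length $\diam(\mathcal{X})$. I then subdivide this cube into a uniform grid of subcubes of side $s$, chosen so that each subcube has diameter $s\sqrt{d}\leq\sqrt{\epsilon}$, i.e.\ $s\leq\sqrt{\epsilon/d}$. Any two points lying in a common subcube are then at Euclidean distance at most $s\sqrt{d}\leq\sqrt{\epsilon}$, so each subcube can contain at most one point of a $\sqrt{\epsilon}$-separated collection.

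Counting the subcubes then bounds the number of points: covering an interval of length $\diam(\mathcal{X})$ by subintervals of length $s=\sqrt{\epsilon/d}$ requires $\diam(\mathcal{X})\sqrt{d}\,\epsilon^{-1/2}$ of them per axis, giving on the order of ${(\diam(\mathcal{X})\sqrt{d})}^d\epsilon^{-d/2}$ cells in total. A $\sqrt{\epsilon}$-separated set therefore has at most $\left\lceil{(\diam(\mathcal{X})\sqrt{d})}^d\epsilon^{-d/2}\right\rceil$ points, so any collection of one more point must place two points, say indexed by $i<j$, into a single cell. Translating back through the equivalence above, $t=j-1\in\Nint{T-1}$ then witnesses $\min_{i'\in\Nint{t}}\|q_{t+1}-q_{i'}\|^2\leq\|q_j-q_i\|^2\leq\epsilon$, which is the claim.

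The step needing the most care is the cell count, specifically the bookkeeping of the ceiling functions: a literal uniform grid uses $\lceil\diam(\mathcal{X})\sqrt{d}\,\epsilon^{-1/2}\rceil$ cells per axis, i.e.\ $\lceil\diam(\mathcal{X})\sqrt{d}\,\epsilon^{-1/2}\rceil^{d}$ cells, whereas the statement places the ceiling \emph{outside} the $d$-th power. I would therefore either verify that this slightly looser product-of-ceilings bound still suffices for the downstream use, or refine the covering (for instance choosing $s$ to divide $\diam(\mathcal{X})$ exactly, or covering by balls of radius $\sqrt{\epsilon}/2$ together with a volume/packing estimate) so that the count matches ${(\diam(\mathcal{X})\sqrt{d})}^d\epsilon^{-d/2}$ up to the single outer ceiling. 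Everything else---the enclosing-cube estimate, the per-cell uniqueness of separated points, and the final pigeonhole count---is elementary.
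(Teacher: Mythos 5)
Your proof is correct and follows essentially the same route as the paper's: enclose $\mathcal{X}$ in a hypercube of side $\diam(\mathcal{X})$, partition it into subcubes of side $\sqrt{\epsilon/d}$ (hence of diameter at most $\sqrt{\epsilon}$), and apply the pigeonhole principle to force two points into one cell. Your closing remark about the ceiling sitting outside the $d$-th power is a fair observation---the paper's own proof glosses over exactly this bookkeeping---but it affects only the constant in the stated budget, not the substance of the argument.
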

\begin{proof}
    The set $\mathcal{X}$ is covered by a
    hypercube of side $\diam(\mathcal{X})$.
    For any $\epsilon>0$
    The minimum number of hypercubes of side
    $\sqrt{\frac{\epsilon}{d}}$ that covers the hypercube of
    side $\diam(\mathcal{X})$
    is given by $\left\lceil {\frac{\diam(
            \mathcal{X})^d}{\left(\epsilon/d\right)^\frac{d}{2}}
    }\right\rceil$. Note that $T$ is at least one more than
    this minimum number. By the \emph{pidgeonhole principle}, at
    least one of the hypercubes with side
    $\sqrt{\frac{\epsilon}{d}}$ must have at least two
    points. However, the maximum separation allowed between two
    points within such a hypercube is $\sqrt{\epsilon}$. Thus,
    for some $i,j\in\Nint{T}$ with $i\neq j$, we have
    $\|q_i - q_j\|\leq \sqrt{\epsilon}$. We complete the
    proof with $t\triangleq\max(i,j)-1\in\Nint{T-1}$.
\end{proof}
\begin{prop}[\newcommand{\propBoundOnTAlgoOneHeading}{\textsc{Worst-case, sufficient budget for
    Algorithm~\ref{algo:cov_method}}}\propBoundOnTAlgoOneHeading{}]\label{prop:BoundOnTAlgoOne}
    \newcommand{\propBoundOnTAlgoOne}{For a user-specified
        suboptimality threshold $\eta >0$, define
        $\Tsuffcov=\left\lceil{(\diam(\mathcal{X})\sqrt{d})^d
        \left({\frac{L_J}{\eta}}\right)^\frac{d}{2}}\right\rceil
        + 1$.  Algorithm~\ref{algo:cov_method} terminates at
        an iteration $t\leq \Tsuffcov$ satisfying
        \eqref{eq:cov_prob_desirata} with
        $\Deltaglobal\leq\eta$, irrespective of the
        choice of $J\in\FJ$ and the initial solution $q_1\in
        \mathcal{X}$.}\propBoundOnTAlgoOne{}
\end{prop}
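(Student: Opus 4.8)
The plan is to combine the minorant error bound of Lemma~\ref{lem:approx} with the covering argument of Lemma~\ref{lem:pidgeonhole}, tuning the pidgeonhole scale $\epsilon$ to the target accuracy $\eta$. Throughout I take for granted that \eqref{prob:cov_method} is feasible with a finite minimizer (as already noted for compact, nonempty $\mathcal{X}$), so that the iterates $q_{t+1}$ and the values $J_t^-(q_{t+1})$ are well defined.

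First I would bound the computed suboptimality $\Deltaglobal$ at an arbitrary iteration $t$ in terms of how close the new query point is to the previous ones. Since $q_{t+1}$ is itself among $\{q_i : i\in\Nint{t+1}\}$, the update \eqref{eq:cov_method_subopt} gives
\[
  \Deltaglobal \;=\; \min_{i\in\Nint{t+1}} J(q_i) - J_t^-(q_{t+1}) \;\le\; J(q_{t+1}) - J_t^-(q_{t+1}).
\]
Evaluating the minorant error bound of Lemma~\ref{lem:approx} at $x=q_{t+1}$ with $f=J$ then yields $J(q_{t+1}) - J_t^-(q_{t+1}) \le L_J \min_{i\in\Nint{t}} \|q_{t+1}-q_i\|^2$, so that
\[
  0 \;\le\; \Deltaglobal \;\le\; L_J \min_{i\in\Nint{t}} \|q_{t+1}-q_i\|^2,
\]
where the lower bound is \eqref{eq:cov_prob_ineq}.

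Next I would apply Lemma~\ref{lem:pidgeonhole} with $\epsilon = \eta/L_J$, the value for which $\lceil (\diam(\mathcal{X})\sqrt{d})^d \epsilon^{-d/2}\rceil + 1 = \Tsuffcov$. Suppose the algorithm has not terminated before iteration $\Tsuffcov-1$; then the points $q_1,\ldots,q_{\Tsuffcov}$ have all been generated. If these $\Tsuffcov$ points are distinct, Lemma~\ref{lem:pidgeonhole} supplies an index $t\in\Nint{\Tsuffcov-1}$ with $\min_{i\in\Nint{t}}\|q_{t+1}-q_i\|^2 \le \eta/L_J$; substituting into the displayed bound gives $\Deltaglobal \le L_J\cdot(\eta/L_J) = \eta$ at that iteration, triggering the break condition. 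The only gap is the distinctness hypothesis of Lemma~\ref{lem:pidgeonhole}: if instead $q_{t+1}=q_i$ for some $i\le t\le\Tsuffcov-1$, then $\min_{i\in\Nint{t}}\|q_{t+1}-q_i\|^2=0$ and the same displayed bound already forces $\Deltaglobal\le 0\le\eta$. In either case the algorithm halts at some $t\le\Tsuffcov$ with $0\le\Deltaglobal\le\eta$, which is \eqref{eq:cov_prob_desirata}.

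The main obstacle is not a deep one but a bookkeeping one: correctly matching the pidgeonhole lemma's distinctness hypothesis and its index range $\Nint{\Tsuffcov-1}$ against the algorithm's iterates, and ensuring the degenerate case of a repeated iterate is covered so the conclusion holds unconditionally. I would also double-check the direction of the error bound in Lemma~\ref{lem:approx} (it must \emph{upper}-bound $J(q_{t+1})-J_t^-(q_{t+1})$) and the algebraic identity $\epsilon^{-d/2}=(L_J/\eta)^{d/2}$ that aligns the choice $\epsilon=\eta/L_J$ with the stated definition of $\Tsuffcov$.
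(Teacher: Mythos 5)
Your proposal is correct and follows essentially the same route as the paper's proof: bound $\Deltaglobal \leq J(q_{t+1}) - J_t^-(q_{t+1}) \leq L_J \min_{i\in\Nint{t}}\|q_{t+1}-q_i\|^2$ via Lemma~\ref{lem:approx}, then invoke Lemma~\ref{lem:pidgeonhole} with $\epsilon = \eta/L_J$. Your explicit handling of the repeated-iterate case (where the distinctness hypothesis of Lemma~\ref{lem:pidgeonhole} fails but the bound is trivially $0$) is a detail the paper leaves implicit, and is a welcome addition.
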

\begin{proof} 
    From
    \eqref{eq:cov_method_subopt} and Lemma~\ref{lem:approx},
    we have for every $t\in\Nint{T-1}$,
    \begin{align}
        \Deltaglobal=J(\xddagglobal) - J_{t}^-(q_{t+1})\leq J(q_{t+1}) - J_{t}^-(q_{t+1})\leq L_J
        \min_{i\in\Nint{t}} \|q_t -
        q_i\|^2. \label{eq:contradiction_step}
    \end{align}
    The proof follows from Lemma~\ref{lem:pidgeonhole} and
    the choice of the budget $T$.
\end{proof}

Proposition~\ref{prop:BoundOnTAlgoOne} shows that
Algorithm~\ref{algo:cov_method} is \emph{worst-case optimal}
for the class of smooth optimization problems of the form
\eqref{prob:orig_prob_noH} with twice-differentiable
objective function $J$. For $L_J$ set to the true Lipschitz
gradient constant $K_J$, the sufficient budget $\Tsuffcov$
prescribed for Algorithm~\ref{algo:cov_method} by
Proposition~\ref{prop:BoundOnTAlgoOne} matches the
well-known minimum number of iterations necessary for any
first-order sequential optimization algorithm to solve
\eqref{prob:orig_prob_noH}~\cite[Thm.
4]{vavasis1995complexity}, up to a constant independent of
$K_J$ and $\eta$.

\section{Tractable algorithms for global optimization of
\eqref{prob:orig_prob}}
\label{sec:main}

We now present the main results, and address
Problems~\ref{p_st:algo} and~\ref{p_st:algo_safe}.
Specifically, we propose Algorithms~\ref{algo:my_cov_method}
and~\ref{algo:my_cov_method_mu_convex} (see
page~\pageref{algo:my_cov_method}) for the global
optimization of \eqref{prob:orig_prob}. These first-order
sequential optimization algorithms
solve tractable, surrogate optimization problems of
\eqref{prob:orig_prob}, constructed using the past oracle
queries and smoothness information, to determine the
next query point. 
Algorithm~\ref{algo:my_cov_method_mu_convex} achieves constraint
violation-free optimization by performing an
additional projection step.

To help the reader put the proposed 
algorithms in context, we provide two illustrative
examples in page~\pageref{tab:illustration}.  The first
example demonstrates how the choice of hyperparameter $L_J$
in Algorithm~\ref{algo:my_cov_method} affects the
approximants, and consequently, the number of iterations to
solve \eqref{prob:orig_prob} and the number of constraint
violations.  The second example shows that
Algorithm~\ref{algo:my_cov_method_mu_convex} can compute a
solution to \eqref{prob:orig_prob}, without incurring any
constraint violation, when $H$ is additionally known 
to be strongly-convex. Both of the algorithms escape a
local minimum near the initial solution guess $q_1$ to
arrive at the global minimum. We provide the numerical
details of the examples in Appendix~\ref{app:example}. 

\subsection{Global optimization of \eqref{prob:orig_prob}
for smooth (possibly non-convex) $H$}
\label{sub:CMOE}

Algorithm~\ref{algo:my_cov_method} solves
\eqref{prob:orig_prob} with smooth (possibly non-convex)
functions $J$ and $H$. It constructs iterates
by solving a surrogate optimization problem
\eqref{prob:relax_surrogate_prob}. Note that every feasible solution
of \eqref{prob:orig_prob} is feasible for
\eqref{prob:relax_surrogate_prob}, since the constraint
$H_t^-\leq 0$ is a relaxation of the constraint $H\leq 0$ in
\eqref{prob:orig_prob} (Lemma~\ref{lem:approx}). Motivated
by Algorithm~\ref{algo:cov_method},
Algorithm~\ref{algo:my_cov_method} also replaces the unknown
objective $J$ of \eqref{prob:orig_prob} with its known,
data-driven minorant $J_t^-$ in \eqref{prob:relax_surrogate_prob}.
Algorithm~\ref{algo:my_cov_method} does not require a
feasible initial solution guess $q_1\in \mathcal{X}$ to
solve \eqref{prob:orig_prob}.

We prove the correctness of
Algorithm~\ref{algo:my_cov_method} using a
$\delta$-approximation of \eqref{prob:orig_prob},
\begin{align}
    \text{$\delta$-relax of
    \eqref{prob:orig_prob}}:\ \ \mathrm{minimize}\ J(x)\quad
    \mathrm{subject\ to}\ x \in
    \mathcal{X},\ H(x)\leq
    \delta\label{prob:orig_prob_relax_delta}.
\end{align}
The optimal values of \eqref{prob:orig_prob}
and \eqref{prob:orig_prob_relax_delta} are closely related
under the following assumption.

\begin{figure*}[p]
    \input{additional_texfiles/algorithms_my_cov_method.tex}
\end{figure*}
\newcommand{\shiftingUpForText}{1.5em}
\begin{table*}[p]
    \newcommand{\trimValues}{0 0 0 0}
    \centering
    \begin{tabular}{c c}
        \hspace*{-11em}\begin{minipage}[b]{0.2\linewidth}
            \centering
            Example 1: Non-convex $H$, infeasible
            start\vspace*{\shiftingUpForText}
        \end{minipage} 
       &\hspace*{-11.75em}
       \includegraphics[width=0.5\linewidth,Trim=\trimValues, clip]{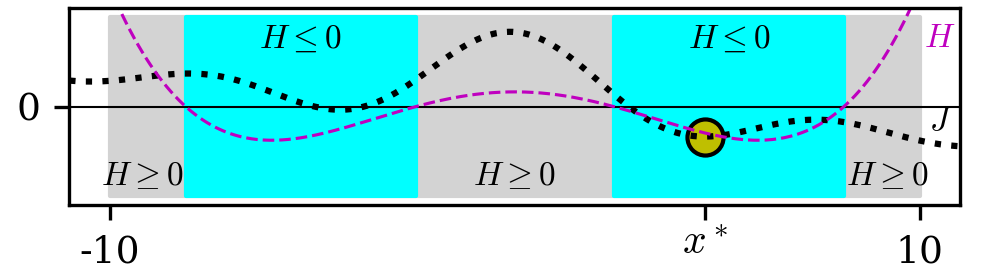}
       \begin{minipage}[b]{0.25\linewidth}
           \centering
            See Appendix~\ref{app:example} for\linebreak more
            details on the example
            problem.\vspace*{\shiftingUpForText}
        \end{minipage}\\[0.5ex]
    \includegraphics[width=0.45\linewidth,Trim=\trimValues, clip]{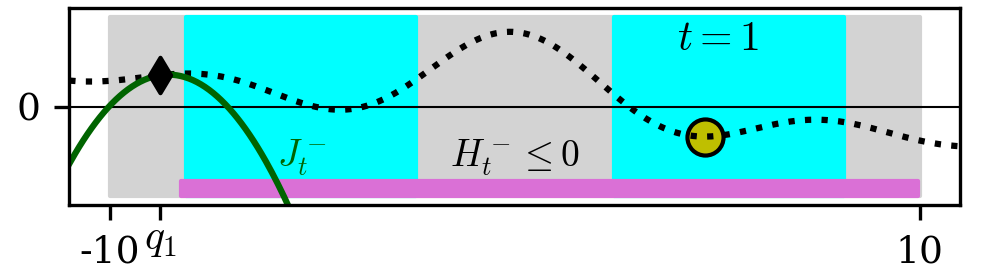} & 
    \includegraphics[width=0.45\linewidth,Trim=\trimValues, clip]{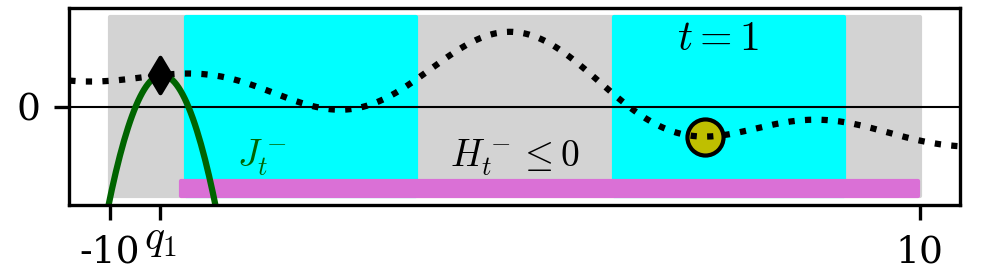}\\
    \includegraphics[width=0.45\linewidth,Trim=\trimValues, clip]{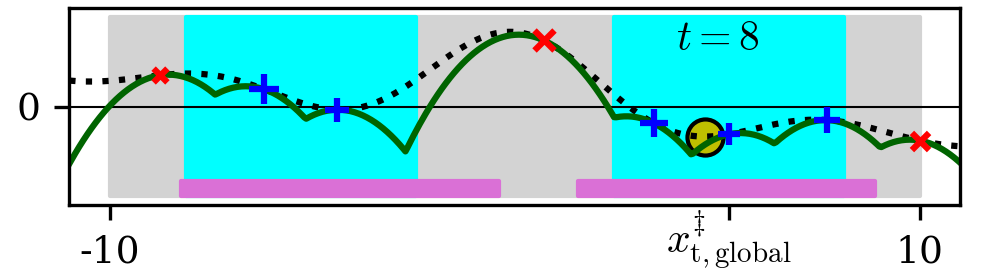} &
    \includegraphics[width=0.45\linewidth,Trim=\trimValues, clip]{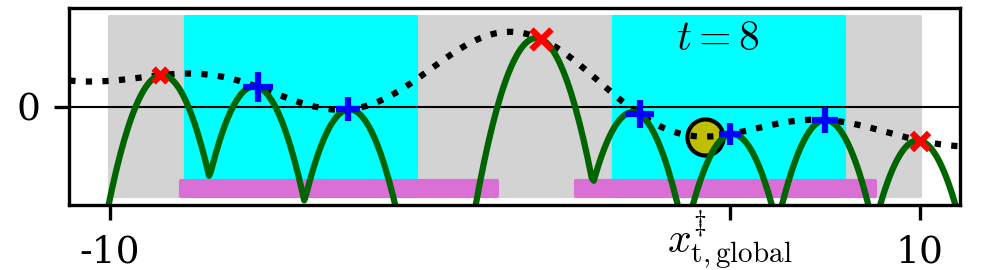}\\
    \includegraphics[width=0.45\linewidth,Trim=\trimValues, clip]{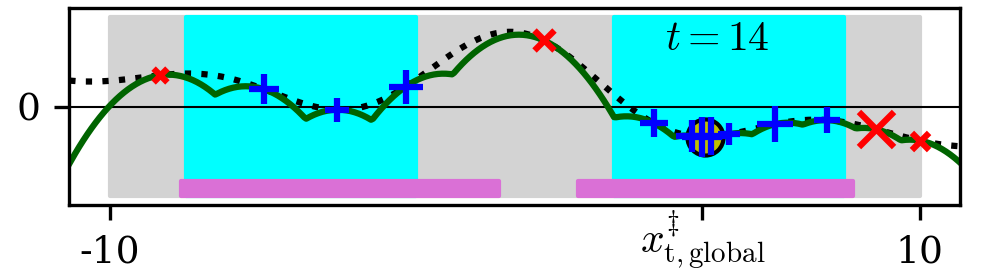} &
    \includegraphics[width=0.45\linewidth,Trim=\trimValues, clip]{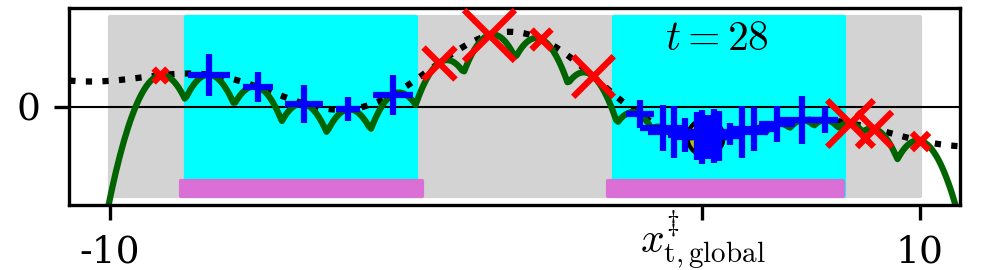}\\
    \begin{minipage}[t]{0.45\linewidth}
        \centering
        Algorithm~\ref{algo:my_cov_method} with $L_J=0.2$ \linebreak 
        computes an $(0,\eta, \delta)$-minimum in \linebreak 
        $14$ iterations with $4$ infeasible queries.\\
    \end{minipage} & 
    \begin{minipage}[t]{0.45\linewidth}
        \centering
        Algorithm~\ref{algo:my_cov_method} with $L_J = 5
        \times 0.2$\linebreak computes an $(0,\eta, \delta)$-minimum in\linebreak
        $28$ iterations with $7$ infeasible queries.\\
    \end{minipage} \\[3em]
    \hspace*{-11em}\begin{minipage}[b]{0.2\linewidth}
            \centering
            Example 2: $\mu$-convex $H$, feasible
            start\vspace*{\shiftingUpForText}
        \end{minipage} 
       &\hspace*{-11.75em}
       \includegraphics[width=0.5\linewidth,Trim=\trimValues, clip]{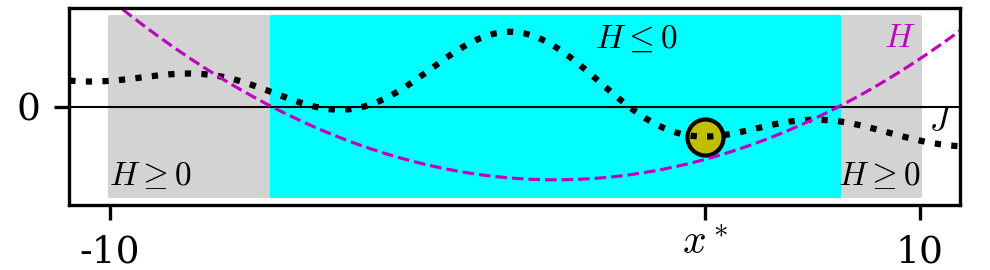}
       \begin{minipage}[b]{0.25\linewidth}
           \centering
            See Appendix~\ref{app:example} for\linebreak more
            details on the example
            problem.\vspace*{\shiftingUpForText}
        \end{minipage}\\[0.5ex]
    \includegraphics[width=0.45\linewidth,Trim=\trimValues, clip]{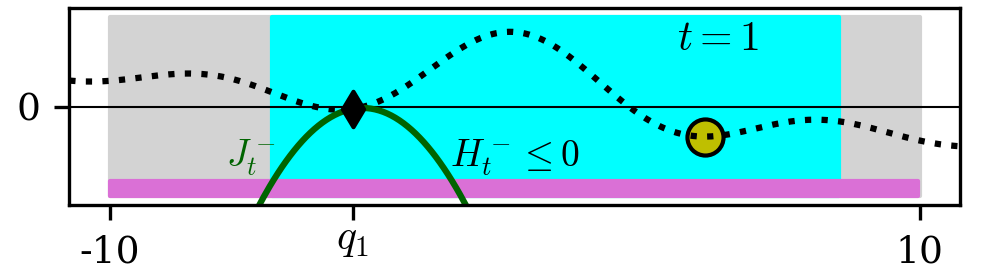} &
    \includegraphics[width=0.45\linewidth,Trim=\trimValues, clip]{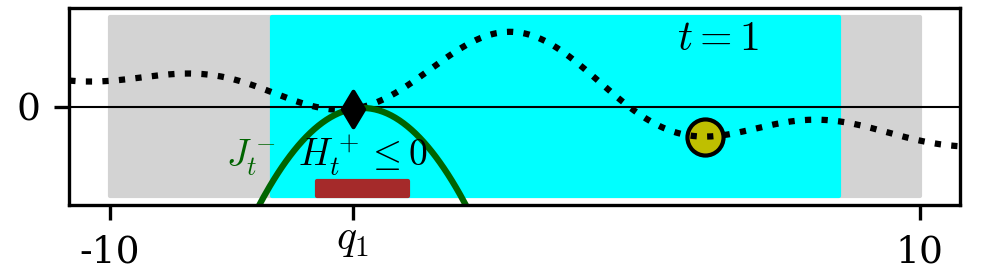}\\
    \includegraphics[width=0.45\linewidth,Trim=\trimValues, clip]{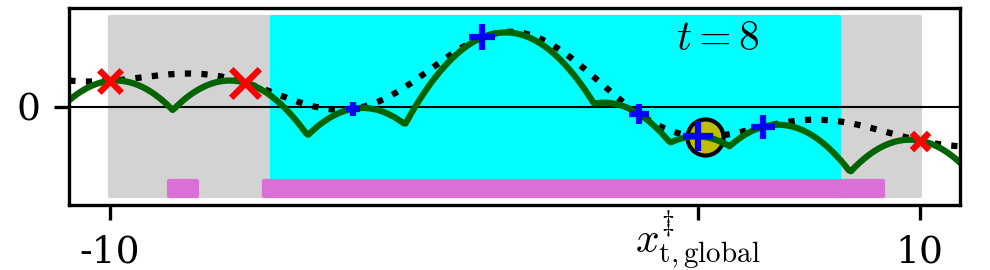} &
    \includegraphics[width=0.45\linewidth,Trim=\trimValues, clip]{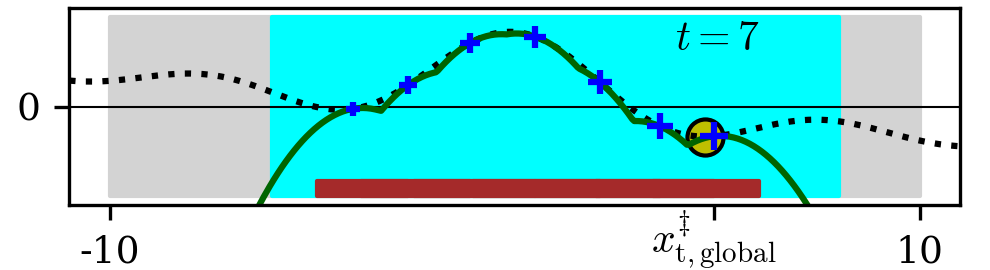}\\
    \includegraphics[width=0.45\linewidth,Trim=\trimValues, clip]{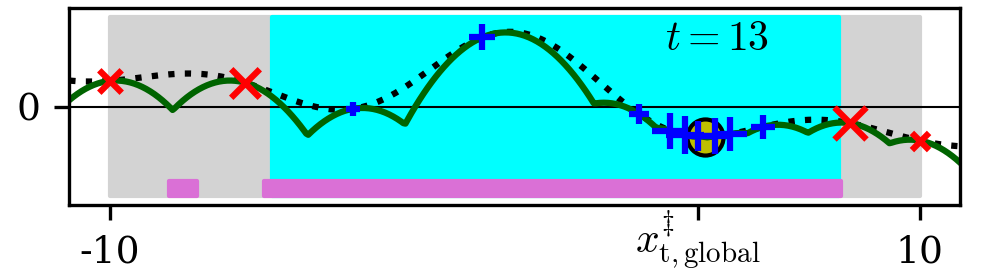} &
    \includegraphics[width=0.45\linewidth,Trim=\trimValues, clip]{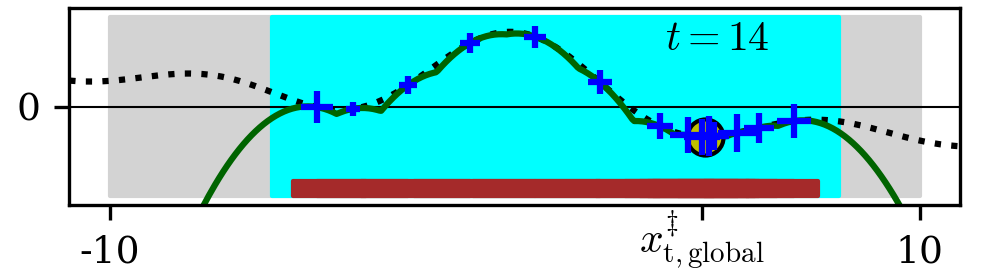}\\
    \begin{minipage}[t]{0.45\linewidth}
        \centering
        Algorithm~\ref{algo:my_cov_method} computes an\linebreak
        $(0,\eta,\delta)$-minimum in $13$ iterations\linebreak 
        with $4$ infeasible queries.\\
    \end{minipage} & 
    \begin{minipage}[t]{0.45\linewidth}
        \centering
        Algorithm~\ref{algo:my_cov_method_mu_convex} computes an\linebreak
        $(0,\eta,0)$-minimum in $14$ iterations\linebreak 
        with $0$ infeasible queries.\\
    \end{minipage} \\[3.5em]
   \end{tabular}
   \vspace*{-1em}
   \caption{Illustration of Algorithms~\ref{algo:my_cov_method}
       and~\ref{algo:my_cov_method_mu_convex} with
       $\eta=0.01$ and $\delta=10^{-5}$. (top-left and top-right) Algorithm~\ref{algo:my_cov_method} needs more
       iterations to solve \eqref{prob:orig_prob} for larger
       $L_J$, while handling non-convex $H$ and infeasible
       start. (bottom-left and bottom-right)
       For a $\mu$-convex $H$ and feasible start,
       Algorithm~\ref{algo:my_cov_method_mu_convex} computes
       near-global optimum without any infeasible queries.
       Algorithm~\ref{algo:my_cov_method} constructs a
       monotone decreasing sequence of $\{H_t^-\leq 0\}$
       such that $\xopt\in\{H_t^-\leq 0\}$ at every
       iteration, while Algorithm~\ref{algo:my_cov_method_mu_convex} constructs a
       monotone increasing sequence of $\{H_t^+\leq 0\}$
       such that $\xopt\in\{H_t^+\leq 0\}$ at some iteration.
       Legend: initial guess $q_1$ ($\blacklozenge$),
       feasible ({\color{blue} $+$}) and infeasible
       ({\color{red} x}) queries for \eqref{prob:orig_prob}.}\label{tab:illustration}
\end{table*}

\begin{assum}[\textsc{Well-behaved $H$ at the boundary of
    $\{H\leq 0\}$}]\label{assum:good_grad_my_cov_method}
    The constraint function $H$ and the relaxation
    threshold $\delta > 0$ satisfies $ \|\nabla H(x)\|
    \geq \sqrt{2L_H\delta}$ for every $x\in \{0< H\leq
    \delta\}$. 
\end{assum}
Assumption~\ref{assum:good_grad_my_cov_method} ensures that
the gradient of $H$ over the ``excess'' feasible space, the
set $\{0< H\leq \delta\}$ arising from
relaxation of the constraint $\{H\leq 0\}$ to $\{H\leq
\delta\}$ is bounded away from zero
(Figure~\ref{fig:assumption2} on
page~\pageref{fig:assumption2}). The requirement on $H$
imposed by Assumption~\ref{assum:good_grad_my_cov_method}
weakens as the user-specified relaxation threshold $\delta$
approaches zero.
\begin{figure}[h]
    \centering
    \includegraphics[width=0.95\linewidth]{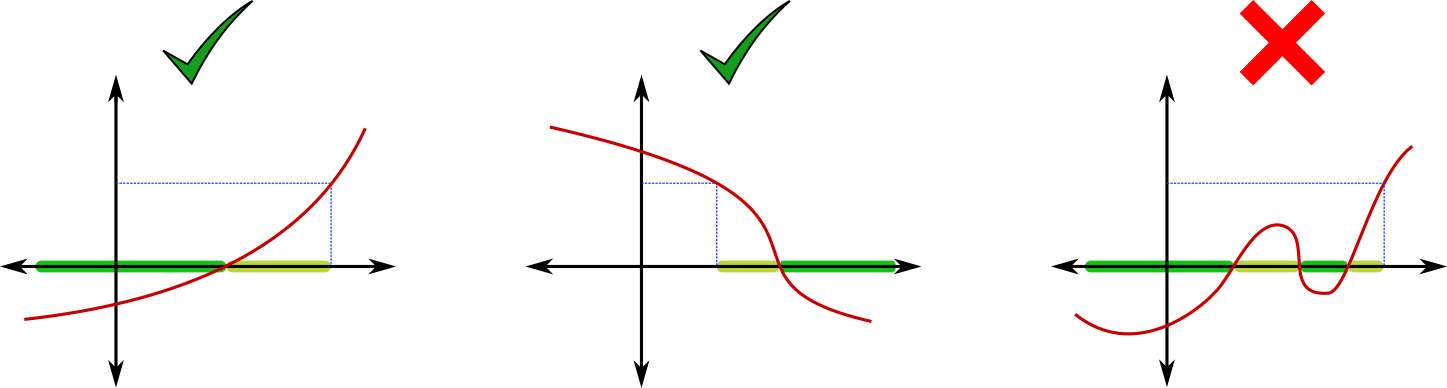}
    \begin{tikzpicture}[overlay, remember picture]
        \small
        \node[color=black] at (-9.15, 2.15) {$H$};
        \node[color=black] at (-10,0.75) {$0 < H\leq \delta$};
        \node[color=black] at (-12.3,1.3) {$H\leq 0$};
        \node at (-11.95,1.75) {$\delta$};
        \node at (-7.45,1.75) {$\delta$};
        \node at (-2.95,1.75) {$\delta$};
    \end{tikzpicture}
    \caption{Assumption~\ref{assum:good_grad_my_cov_method}
        avoids pathological cases, like the rightmost
        figure, that may arise when using a large user-specified relaxation
    threshold $\delta>0$.}
    \label{fig:assumption2}
\end{figure}

\begin{prop}[\textsc{$\delta$-relaxation
    \eqref{prob:orig_prob_relax_delta} closely approximates
\eqref{prob:orig_prob}}]\label{prop:my_cov_method_relation}
Let Assumption~\ref{assum:good_grad_my_cov_method} hold.
Then, whenever \eqref{prob:orig_prob} is feasible, the
global minimum $\xoptdelta\in \mathcal{X}$ of
\eqref{prob:orig_prob_relax_delta} is related to the global
minimum $\xopt$ of \eqref{prob:orig_prob}, in the following
sense 
    \begin{align}
        0\leq J(\xopt) - J(\xoptdelta) \leq
        \GJmax\sqrt{\frac{2\delta}{L_H}}.\label{eq:relationship}
    \end{align}
\end{prop}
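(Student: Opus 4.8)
The plan is to prove the two inequalities in \eqref{eq:relationship} separately. The left inequality $J(\xopt) - J(\xoptdelta) \geq 0$ is immediate: since $\{H \leq 0\} \subseteq \{H \leq \delta\}$, the feasible set of \eqref{prob:orig_prob} is contained in that of \eqref{prob:orig_prob_relax_delta}, so minimizing $J$ over the larger set can only lower the optimal value, giving $J(\xoptdelta) \leq J(\xopt)$. For the right inequality I would first dispose of the trivial case: if $H(\xoptdelta) \leq 0$, then $\xoptdelta$ is feasible for \eqref{prob:orig_prob}, so $J(\xopt) \leq J(\xoptdelta)$, and combined with the left inequality the gap is zero. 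Hence the only substantive case is $0 < H(\xoptdelta) \leq \delta$, in which $\xoptdelta$ lies in the excess region $\{0 < H \leq \delta\}$ where Assumption~\ref{assum:good_grad_my_cov_method} supplies the crucial gradient lower bound $\|\nabla H(\xoptdelta)\| \geq \sqrt{2\LH\delta}$.

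The heart of the argument is to exhibit a point feasible for \eqref{prob:orig_prob} that is close to $\xoptdelta$, constructed by a single steepest-descent step on $H$. I would set $t^\ast = \sqrt{2\delta/\LH}$ and $\tilde x = \xoptdelta - t^\ast \nabla H(\xoptdelta)/\|\nabla H(\xoptdelta)\|$. Applying the quadratic majorant of Lemma~\ref{lem:approx} at $\xoptdelta$ along this direction, together with $H(\xoptdelta)\leq\delta$ and the gradient bound from Assumption~\ref{assum:good_grad_my_cov_method}, gives
\begin{align}
H(\tilde x) &\leq H(\xoptdelta) - t^\ast \|\nabla H(\xoptdelta)\| + \frac{\LH}{2}(t^\ast)^2 \nonumber\\
&\leq \delta - \sqrt{2\LH\delta}\, t^\ast + \frac{\LH}{2}(t^\ast)^2 = \frac{\LH}{2}\Bigl(t^\ast - \sqrt{\tfrac{2\delta}{\LH}}\Bigr)^2 = 0. \nonumber
\end{align}
The key observation is that $t^\ast=\sqrt{2\delta/\LH}$ is exactly the (double) root of this upper-bounding parabola, so it is the smallest displacement guaranteed to drive the majorant to zero. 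Thus $\tilde x$ satisfies $H(\tilde x)\leq 0$ and is feasible for \eqref{prob:orig_prob}.

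With $\tilde x$ feasible, optimality of $\xopt$ over the feasible set of \eqref{prob:orig_prob} gives $J(\xopt)\leq J(\tilde x)$, and the mean-value/Cauchy--Schwarz estimate \eqref{eq:mvt_cs} bounds the cost of the displacement:
\begin{align}
J(\xopt) - J(\xoptdelta) \leq J(\tilde x) - J(\xoptdelta) \leq \GJmax \|\tilde x - \xoptdelta\| = \GJmax\, t^\ast = \GJmax \sqrt{\frac{2\delta}{\LH}}, \nonumber
\end{align}
which is precisely the right-hand side of \eqref{eq:relationship}.

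The step I expect to require the most care is verifying that the constructed point $\tilde x$ --- and, more precisely, the entire segment from $\xoptdelta$ to it --- lies in $\mathcal{X}$, since both the smoothness majorant and the estimate \eqref{eq:mvt_cs} are only asserted over $\mathcal{X}$. I would handle this with a continuity/first-exit argument along the ray $x(t)=\xoptdelta - t\,\nabla H(\xoptdelta)/\|\nabla H(\xoptdelta)\|$: while the segment remains in $\mathcal{X}$ the same majorant computation shows $H(x(t)) \leq \frac{\LH}{2}(t-t^\ast)^2$, which is strictly decreasing on $[0,t^\ast]$ and vanishes at $t^\ast$, so there is a first time $t_0\leq t^\ast$ with $H(x(t_0))=0$. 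Since $\{H\leq 0\}\subseteq\mathcal{X}$ by Assumption~\ref{assum:X}, the endpoint $x(t_0)$ automatically lies in $\mathcal{X}$, and one must argue (using convexity of $\mathcal{X}$ and the fact that it contains the feasible set the descent ray is heading toward) that the path does not leave $\mathcal{X}$ before reaching $\{H\leq 0\}$. Replacing $\tilde x$ by $x(t_0)$, with $\|x(t_0)-\xoptdelta\|=t_0\leq t^\ast$, preserves every bound above and completes the proof.
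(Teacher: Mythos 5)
Your proof is correct and follows essentially the same route as the paper's: split off the trivial case $H(\xoptdelta)\leq 0$, then in the excess region take one step along $-\nabla H(\xoptdelta)$, verify feasibility via the quadratic majorant together with Assumption~\ref{assum:good_grad_my_cov_method}, and bound the objective change by \eqref{eq:mvt_cs}; the only cosmetic difference is that you use a normalized step of length $\sqrt{2\delta/L_H}$ where the paper uses the step $\frac{2\delta}{\|\nabla H(\xoptdelta)\|^2}\nabla H(\xoptdelta)$, whose length is at most $\sqrt{2\delta/L_H}$ by the same assumption. Your closing concern about the segment remaining in $\mathcal{X}$ is a legitimate subtlety that the paper's proof passes over silently, so raising it is to your credit rather than a defect.
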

\begin{proof}
    Note that the
    feasible set $ \mathcal{X}\cap \{H\leq \delta\}$ of
    \eqref{prob:orig_prob_relax_delta} is non-empty and
    bounded, and contains the non-empty feasible solution space of \eqref{prob:orig_prob}. Therefore, $\xoptdelta$ exists, and we
    trivially have the lower bound $J(\xopt) - J(\xoptdelta)
    \geq 0$.

    To prove the proposed upper bound on $J(\xopt) -
    J(\xoptdelta)$, we study two cases --- $H(\xoptdelta)
    \leq 0$ and $0 < H(\xoptdelta) \leq \delta$. In either
    cases, we will construct a feasible solution $z$
    for \eqref{prob:orig_prob}, and characterize an upper
    bound for $J(z)$ using $J(\xoptdelta)$ to complete the
    proof. For the first case ($H(\xoptdelta) \leq 0$), we
    choose $z = \xoptdelta$ to trivially satisfy the upper bound. 

    In the second case, define $z = \xoptdelta -
    \frac{2\delta}{\|\nabla H(\xoptdelta)\|^2} \nabla
    H(\xoptdelta)$. Here, $z$ is well-defined since $\|\nabla
    H(\xoptdelta)\| \geq \sqrt{2L_H\delta}$ in this case by
    Assumption~\ref{assum:good_grad_my_cov_method}. We have
    $H(z)\leq 0$ and consequently feasible for
    \eqref{prob:orig_prob}, since
    \begin{alignat}{2}
        H(z) &\leq H(\xoptdelta) + \nabla H(\xoptdelta)\cdot
        (z-\xoptdelta) + \frac{L_H}{2} \| z -\xoptdelta
        \|^2,
             &\quad&\mbox{(By
             \eqref{eq:approx_major_interim})}\nonumber \\
            &= H(\xoptdelta) + \nabla H(\xoptdelta)\cdot
            \left({-
    \frac{2\delta\nabla
    H(\xoptdelta)}{\|\nabla H(\xoptdelta)\|^2} }\right) + \frac{L_H}{2} \| z -\xoptdelta
    \|^2, &&\mbox{(By the choice of $z$)}\nonumber \\
    &= H(\xoptdelta) - 2 \delta +  \frac{L_H}{2} \frac{4
    \delta^2}{\|\nabla H(\xoptdelta)\|^2} 
    \leq \delta - 2 \delta + \frac{2\delta^2}{2\delta} = 0.
    \nonumber
    \end{alignat}
    Here, we have used the observation that
    $\frac{L_H}{\|\nabla H(\xoptdelta)\|^2}\leq
    \frac{1}{2\delta}$ since $\|\nabla H(\xoptdelta)\| \geq
    \sqrt{2L_H \delta}$.

    Using \eqref{eq:mvt_cs} on $J$ and the upper bound on
    $\|z - \xoptdelta\|= \frac{2\delta}{\|\nabla
    H(\xoptdelta)\|} \leq \sqrt{\frac{2 \delta}{L_H}}$ by
    the choice of $z$, we have $|J(z) -
    J(\xoptdelta)|\leq\GJmax\sqrt{\frac{2\delta}{L_H}}$.
    Therefore, $J(\xoptdelta)\leq J(\xopt) \leq J(z) \leq
    J(\xoptdelta) + \GJmax\sqrt{2\delta/L_H}$, as desired.
\end{proof}

\begin{thm}[\textsc{Algorithm~\ref{algo:my_cov_method} addresses
    Problem~\ref{p_st:algo}}]\label{thm:my_cov_method}
    For any objective function $J$,
    constraint function $H$, and initial solution
    $q_1$, the following statements about
    Algorithm~\ref{algo:my_cov_method} are true for any
    relaxation threshold $\delta$ that satisfies
    Assumption~\ref{assum:good_grad_my_cov_method}:
    {\renewcommand{\theenumi}{\Alph{enumi}}
    \begin{enumerate}
        \item \textsc{(Well-definedness)} For any iteration
            of Algorithm~\ref{algo:my_cov_method}, the
            optimization problem
            \eqref{prob:relax_surrogate_prob} either has a
            finite optimal solution or generates a proof of
            infeasibility of \eqref{prob:orig_prob}
            (resulting in the termination of
            Algorithm~\ref{algo:my_cov_method} at
            Step~\ref{step:algo_my_cov_method_infeas}).\label{thm:my_cov_method_feas}
        \item \textsc{(Anytime property)} Let
            Algorithm~\ref{algo:my_cov_method} run up to an
            iteration $t\in \mathbb{N}$, without arriving at
            a proof of infeasibility of
            \eqref{prob:orig_prob}
            (Theorem~\ref{thm:my_cov_method}\ref{thm:my_cov_method_feas}).
            {
            \begin{enumerate}
                \item\label{thm:my_cov_method_anytime_finite_delta}  
            If $\Deltaglobal<\infty$, then $\xddagglobal$ is a 
            $\left(\GJmax\sqrt{\frac{2\delta}{L_H}},\Deltaglobal,\delta\right)$-minimum
            of \eqref{prob:orig_prob}. Additionally, if
            $\xddagglobal$ is feasible for
            \eqref{prob:orig_prob}, then $\xddagglobal$ is a
            $\left(0,\Deltaglobal,\delta\right)$-minimum of
            \eqref{prob:orig_prob}. 
                \item 
            \label{thm:my_cov_method_anytime_infinite_delta}
            Else ($\Deltaglobal$ is $\infty$), 
            \eqref{prob:orig_prob} is $\gamma$-infeasible,
$\gamma\triangleq-\min(\inf\nolimits_{x \in \mathcal{X}}
H_t^-(x),0)$.  \end{enumerate}}
        \item \textsc{(Worst-case, sufficient
            budget)} For a suboptimality threshold
            $\eta \geq \GJmax\sqrt{\frac{2\delta}{L_H}}$,
            define
            \begin{align}
                \Tsuff&\triangleq \left\lceil{(\diam(
                                \mathcal{X})\sqrt{d})^d\left({{\left(\frac{L_J}{\eta}\right)}^\frac{d}{2}
                                +
                {\left(\frac{L_H}{\delta}\right)}^\frac{d}{2}}\right)}\right\rceil
                        + 1. \nonumber
            \end{align}
            Then, Algorithm~\ref{algo:my_cov_method}
            terminates with an
            $(\eta,\delta)$-minimum of
            \eqref{prob:orig_prob} or declares
            \eqref{prob:orig_prob} is infeasible at some
            iteration $t\leq
            \Tsuff$.\label{thm:my_cov_method_tsuff}
    \end{enumerate}
    }
\end{thm}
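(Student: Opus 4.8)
The plan is to prove the three claims in turn, drawing on the minorant/majorant sandwich of Lemma~\ref{lem:approx}, the $\delta$-relaxation gap of Proposition~\ref{prop:my_cov_method_relation}, and the pigeonhole bound of Lemma~\ref{lem:pidgeonhole}. For well-definedness (Claim~A), I first observe that the surrogate's feasible set $\mathcal{X}\cap\{H_t^-\le 0\}$ is a closed subset of the compact set $\mathcal{X}$, hence compact, and that $J_t^-$ is continuous; so whenever this set is non-empty the Weierstrass theorem yields a finite minimizer $q_{t+1}$. When it is empty, I use that $H_t^-$ is a minorant of $H$ (Lemma~\ref{lem:approx}), giving $\{H\le 0\}\subseteq\{H_t^-\le 0\}$, which together with Assumption~\ref{assum:X} forces $\{H\le 0\}\subseteq\mathcal{X}\cap\{H_t^-\le 0\}=\emptyset$; thus \eqref{prob:orig_prob} is genuinely infeasible and the return at Step~\ref{step:algo_my_cov_method_infeas} is correct.

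For the anytime property (Claim~B) with $\Deltaglobal<\infty$, the constraint $H(\xddagglobal)\le\delta$ holds by construction since $\xddagglobal$ is selected among queries with $H(q_i)\le\delta$. The upper bound $J(\xddagglobal)-J(\xopt)\le\Deltaglobal$ follows because $\xopt$, being feasible for \eqref{prob:orig_prob}, is feasible for the surrogate ($H_t^-(\xopt)\le H(\xopt)\le 0$), so $J_t^-(q_{t+1})\le J_t^-(\xopt)\le J(\xopt)$, and $\Deltaglobal=J(\xddagglobal)-J_t^-(q_{t+1})$. The lower bound $J(\xddagglobal)-J(\xopt)\ge-\GJmax\sqrt{2\delta/L_H}$ comes from noting that $\xddagglobal$ is feasible for the $\delta$-relaxation \eqref{prob:orig_prob_relax_delta}, hence $J(\xddagglobal)\ge J(\xoptdelta)$, and invoking Proposition~\ref{prop:my_cov_method_relation}; if $\xddagglobal$ is moreover feasible for \eqref{prob:orig_prob} then $J(\xddagglobal)\ge J(\xopt)$ directly, sharpening $\eta^-$ to $0$. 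When $\Deltaglobal=\infty$, the algorithm has not declared infeasibility, so the surrogate is feasible and $\inf_{\mathcal{X}}H_t^-\le 0$, whence $\gamma=-\inf_{\mathcal{X}}H_t^-\ge 0$; the minorant bound gives $H(x)\ge H_t^-(x)\ge-\gamma$ on $\mathcal{X}$, so no $x\in\mathcal{X}$ satisfies $H(x)<-\gamma$, i.e.\ $\gamma$-infeasibility.

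For the budget (Claim~C), the plan is to convert the two error estimates of Lemma~\ref{lem:approx} into separation conditions on the queries. If $q_{t+1}$ is $\delta$-infeasible ($H(q_{t+1})>\delta$), then feasibility for the surrogate ($H_t^-(q_{t+1})\le 0$) gives $\delta<H(q_{t+1})-H_t^-(q_{t+1})\le L_H\min_{i\in\Nint{t}}\|q_{t+1}-q_i\|^2$, so $\min_i\|q_{t+1}-q_i\|^2>\delta/L_H$. If $q_{t+1}$ is $\delta$-feasible but does not trigger the break, then $\Deltaglobal>\eta$ and $\Deltaglobal\le J(q_{t+1})-J_t^-(q_{t+1})\le L_J\min_i\|q_{t+1}-q_i\|^2$, so $\min_i\|q_{t+1}-q_i\|^2>\eta/L_J$. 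Hence every query that fails to terminate the algorithm is separated from all earlier queries by more than $\sqrt{\epsilon}$, where $\epsilon=\min(\eta/L_J,\delta/L_H)$. Since $\epsilon^{-d/2}=\max((L_J/\eta)^{d/2},(L_H/\delta)^{d/2})\le(L_J/\eta)^{d/2}+(L_H/\delta)^{d/2}$, the budget $\Tsuff$ exceeds $\lceil(\diam(\mathcal{X})\sqrt{d})^d\epsilon^{-d/2}\rceil+1$; so if $\Tsuff$ distinct queries were produced without termination, Lemma~\ref{lem:pidgeonhole} would return some $q_{t+1}$ with $\min_i\|q_{t+1}-q_i\|^2\le\epsilon\le\delta/L_H$. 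By the first estimate that point cannot be $\delta$-infeasible, so it is $\delta$-feasible, and then $\Deltaglobal\le L_J\epsilon\le\eta$ triggers the break at some $t\le\Tsuff$; Claim~B (with $\eta\ge\GJmax\sqrt{2\delta/L_H}$) certifies $\xddagglobal$ as an $(\eta,\delta)$-minimum.

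I expect the counting in Claim~C to be the main obstacle. The tempting route---bounding the $\delta$-feasible and $\delta$-infeasible queries by two separate packing arguments and adding the ceilings $\lceil(\diam(\mathcal{X})\sqrt{d})^d(L_J/\eta)^{d/2}\rceil$ and $\lceil(\diam(\mathcal{X})\sqrt{d})^d(L_H/\delta)^{d/2}\rceil$---overshoots $\Tsuff$ by up to one, because $\lceil a\rceil+\lceil b\rceil$ can exceed $\lceil a+b\rceil$. The fix is to run a single pigeonhole at the finer scale $\epsilon=\min(\eta/L_J,\delta/L_H)$ and exploit that the close query returned by Lemma~\ref{lem:pidgeonhole} is automatically $\delta$-feasible; this is what makes the sum $\Tsuff$---which dominates the corresponding maximum---a valid sufficient budget. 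I would also record the convention that the $(\eta,\delta)$-minimum guarantee is asserted under feasibility of \eqref{prob:orig_prob} (otherwise the infeasibility branch is the operative output), so that the appeals to $\xopt$ in Claim~B remain well posed.
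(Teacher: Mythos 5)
Your proof is correct, and for Claims~A and~B it follows the paper's argument in all essentials: minorant containment $\{H\le 0\}\subseteq\{H_t^-\le 0\}$ plus compactness for well-definedness, the sandwich of $J_t^-(q_{t+1})$ below $J(\xopt)\le J(\xddagglobal)$ combined with Proposition~\ref{prop:my_cov_method_relation} for the anytime bounds, and the pointwise bound $H\ge H_t^-\ge \inf_{\mathcal{X}}H_t^-$ for $\gamma$-infeasibility. Two genuine differences are worth recording. First, in Claim~B you derive the upper bound by observing that $\xopt$ itself is feasible for the surrogate (since $H_t^-(\xopt)\le H(\xopt)\le 0$), whereas the paper's chain \eqref{eq:mycovmethod_ineq} passes through $J_t^-(q_{t+1})\le J_t^-(\xoptdelta)$, which implicitly treats $\xoptdelta$ as surrogate-feasible even though only $H_t^-(\xoptdelta)\le\delta$ is guaranteed; your routing through $\xopt$ is the cleaner one and yields the same bounds. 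Second, in Claim~C the paper argues in two stages: it first rules out, by a pigeonhole at scale $\delta/L_H$, that more than $T_\delta-1$ iterates can be $\delta$-infeasible, so the $\delta$-feasible subsequence $\Pi$ is large enough to run a second pigeonhole at scale $\eta/L_J$. You instead run a single pigeonhole at the finer scale $\epsilon=\min(\eta/L_J,\delta/L_H)$ and note that the close query it returns is forced to be $\delta$-feasible and to trigger the break. This sidesteps the ceiling-additivity bookkeeping that the two-stage count requires (and which you correctly identify as the delicate point), and it in fact establishes the slightly stronger sufficient budget $\left\lceil{(\diam(\mathcal{X})\sqrt{d})}^d\max\left({(L_J/\eta)}^{d/2},{(L_H/\delta)}^{d/2}\right)\right\rceil+1\le\Tsuff$; what it gives up is the explicit per-scale accounting of how many $\delta$-infeasible versus $\delta$-feasible iterates can occur, which the paper's decomposition makes visible. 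Your closing remark that the $(\eta,\delta)$-minimum branch presupposes feasibility of \eqref{prob:orig_prob} (otherwise the infeasibility branch is the operative output) is a sensible convention and matches the paper's intent.
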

\begin{proof}
    \emph{Proof of~\ref{thm:my_cov_method_feas})} 
    By
    Lemma~\ref{lem:approx}, every
    feasible solution of \eqref{prob:orig_prob} is feasible
    for \eqref{prob:relax_surrogate_prob} at every iteration
    of Algorithm~\ref{algo:my_cov_method} since $H_t^-\leq
    H$ for every iteration $t$. Therefore, the 
    infeasibility of \eqref{prob:relax_surrogate_prob} at
    any iteration implies the infeasibility of 
    \eqref{prob:orig_prob}. On the other hand, due to the
    compactness of $ \mathcal{X}$, the optimal
    solution of \eqref{prob:relax_surrogate_prob} is finite
    whenever \eqref{prob:relax_surrogate_prob} is feasible.
    
    \emph{Proof
    of~\ref{thm:my_cov_method_anytime_finite_delta})} Since
    $\Deltaglobal<\infty$ at the end of iteration
    $t\in\Nint{T-1}$, we know that there is some
    iteration $i\in [t]$ such that $H(q_{i+1})\leq \delta$.
    Consequently, $\xddagglobal$ is well-defined, and
    $H(\xddagglobal) \leq \delta$. 
    From 
    \eqref{prob:orig_prob_relax_delta} and
    \eqref{eq:my_cov_method_min}, 
    $J(\xoptdelta)\leq J(\xddagglobal)\triangleq
    \min_{\mycovmethodqi} J(q_i)$. We also know that
    $J_{t}^-(q_{t+1})\leq J_{t}^-(\xoptdelta)$, since
    $q_{t+1}$ is the optimal solution of 
    \eqref{prob:relax_surrogate_prob} at iteration $t$.
    From Lemma~\ref{lem:approx}, we have
    \begin{align}
        J_{t}^-(q_{t+1})\leq J_{t}^-(\xoptdelta)\leq J(\xoptdelta)\leq J(\xddagglobal). \label{eq:mycovmethod_ineq}
    \end{align}
    The desired bounds on the true global suboptimality
    $J(\xddagglobal) - J(\xopt)$ at the end of iteration $t$
    follows from \eqref{eq:mycovmethod_ineq} and
    Proposition~\ref{prop:my_cov_method_relation}.
    Specifically, 
    \begin{alignat}{2}
        J(\xddagglobal) - J(\xopt)&\leq J(\xddagglobal) -
        J(\xoptdelta)&&\leq
        \Deltaglobal, \nonumber \\
        J(\xddagglobal) - J(\xopt)&\geq J(\xddagglobal) -
        J(\xoptdelta) - \GJmax\sqrt{\frac{2\delta}{L_H}} &&\geq -
        \GJmax\sqrt{\frac{2\delta}{L_H}}, \nonumber
    \end{alignat}
    where $\Deltaglobal$ is defined in
    \eqref{eq:my_cov_method_subopt}. We have a tighter lower
    bound on the global suboptimality $J(\xddagglobal) -
    J(\xopt) \geq 0$, when $H(\xddagglobal) \leq 0$.

    \emph{Proof
    of~\ref{thm:my_cov_method_anytime_infinite_delta})}
    Since $\Deltaglobal$ is $\infty$ at the end of iteration
    $t\in\Nint{T-1}$, there is no query $q_i$ with
    $i\in\Nint{t+1}$ such that $H(q_i)\leq \delta$. Consequently, 
    $\xddagglobal$ is not defined. On other hand, we know
    that $\inf_{x\in \mathcal{X}} H_t^-(x) \leq 0$ since
    \eqref{prob:relax_surrogate_prob} was feasible at
    iteration $t$. By Lemma~\ref{lem:approx}, we know that
    $H_t^-\leq H$, which implies that \eqref{prob:orig_prob}
    is $(-\inf_{x\in \mathcal{X}} H_t^-(x))$-infeasible. We
    define $\gamma=-\min(\inf_{x\in \mathcal{X}}
    H_t^-(x)),0)$ to ensure that $\gamma \geq 0$.
    
    \emph{Proof of~\ref{thm:my_cov_method_tsuff})} 
    For the given budget $T$, define
    $\Pi\triangleq\{i\in\Nint{T-1}: H(q_i) \leq
    \delta\}\subseteq \Nint{T-1}$ as the finite set of
    iterations in Algorithm~\ref{algo:my_cov_method} that
    resulted in feasible iterates for
    \eqref{prob:relax_surrogate_prob}.
    Algorithm~\ref{algo:my_cov_method} has three outcomes
    for the surrogate optimization problem
    \eqref{prob:relax_surrogate_prob}:
    \begin{enumerate}
        \item \eqref{prob:relax_surrogate_prob} is
            infeasible at some iteration $t\in\Nint{T-1}$,
        \item \eqref{prob:relax_surrogate_prob} is
            feasible at every iteration $t\in\Nint{T-1}$, but
            $\Pi$ is empty, or
        \item \eqref{prob:relax_surrogate_prob} is
            feasible at every iteration $t\in\Nint{T-1}$, and
            $\Pi$ is non-empty.
    \end{enumerate}
    By
    Theorem~\ref{thm:my_cov_method}\ref{thm:my_cov_method_feas},
    the first outcome will result in
    Algorithm~\ref{algo:my_cov_method} terminating with a
    proof of infeasibility.
    Therefore, we need to only focus on the second and the
    third outcomes.

    Next, we rule out the second outcome when $T\geq
    T_\delta\triangleq\left\lceil{(\diam(
    \mathcal{X})\sqrt{d})^d
    {\left(\frac{L_H}{\delta}\right)}^\frac{d}{2}}\right\rceil
    + 1$. Assume, for contradiction, that the second outcome
    occurs when solving some problem instance of
    \eqref{prob:orig_prob} using
    Algorithm~\ref{algo:my_cov_method} with $T=T_\delta$.
    Specifically, we have assumed that 
    \eqref{prob:relax_surrogate_prob} is feasible at every
    iteration, but the set $\Pi$ is empty.  Consequently,
    for every iteration $t\in\Nint{T-1}$,
    $H_t^-(q_{t+1})\leq 0$ ($q_{t+1}$ is feasible for
    \eqref{prob:relax_surrogate_prob}) and $H(q_{t+1})>
    \delta$ ($\Pi$ is empty). Therefore, 
    \begin{align}
        H_t^-(q_{t+1})\leq 0 < \delta <H(q_{t+1})
        \Longrightarrow \delta < H(q_{t+1}) -
        H_t^-(q_{t+1}),\label{eq:contradiction_step_tsuff}
    \end{align}
    for every iteration $t\in\Nint{T-1}$. Recall that
    $H(q_{t+1}) -
    H_t^-(q_{t+1})\leq L_H\min_{i\in\Nint{t}}\|q_{t+1} -
    q_i\|^2$ at every iteration $t\in\Nint{T-1}$ by
    Lemma~\ref{lem:approx}. We obtain a contradiction of
    \eqref{eq:contradiction_step_tsuff}, as desired, via
    Lemma~\ref{lem:pidgeonhole} and the choice of
    $T_\delta$. 
    
    Having ruled out the second outcome for $T= \Tsuff >
    T_\delta$, we now turn to the third outcome. We claim
    that, in this case, \emph{Algorithm~\ref{algo:my_cov_method}
    terminates with an $(\eta,\delta)$-minimum of
\eqref{prob:orig_prob} at some iteration $t\in[\Tsuff-1]$}.
    To prove the claim, we again pursue a proof via
    contradiction. Specifically, we assume for
    contradiction that \eqref{prob:relax_surrogate_prob} is
    feasible at every iteration $t\in\Nint{\Tsuff-1}$, the
    set $\Pi$ is non-empty, and $\Deltaglobal > \eta$. 
    By the same arguments used to rule out the second
    outcome, 
    $|\Pi| \geq \Tsuff-(T_\delta-1)$. From
    Lemma~\ref{lem:approx}, we arrive at the contradiction
    that for every $t\in\Pi$, $\Deltaglobal\triangleq
    J(\xddagglobal) - J_t^-(q_{t+1}) \leq J(q_{t+1}) -
    J_t^-(q_{t+1})\leq L_J\min_{i\in\Nint{t}} \|q_{t+1} -
    q_i\|^2\leq L_J\min_{i\in\Pi} \|q_{t+1} -
    q_i\|^2 \leq \eta$ by Lemma~\ref{lem:pidgeonhole} and
    the choice of $\Tsuff$.  
\end{proof}

We now briefly discuss a minor modification to 
Algorithm~\ref{algo:my_cov_method} that can significantly
improve the computed near-infeasibility certificate
$\gamma$. The modification is relevant only when the
feasibility of \eqref{prob:orig_prob} is unknown and the
initial solution guess $q_1$ is infeasible. Instead of
determining $\gamma$ based on the iterates obtained from
solving \eqref{prob:relax_surrogate_prob}
(Step~\ref{step:algo_my_cov_method_infeas} of
Algorithm~\ref{algo:my_cov_method}), we design oracle
queries based on the following rule
\begin{align}
    q_{t+1} = \inf_{x\in \mathcal{X}} H_t^-(x),\label{eq:new_infeas_rule}
\end{align}
until either $H(q_{t+1}) \leq \delta$, which ensures that
$\Deltaglobal<\infty$, or we reach the prescribed budget of
oracle calls. The rule \eqref{eq:new_infeas_rule}, which is
to be executed before Step~\ref{step:algo_my_cov_method_for}
of Algorithm~\ref{algo:my_cov_method}, is motivated by
\eqref{prob:cov_method} in Algorithm~\ref{algo:cov_method}.
In contrast to \eqref{prob:relax_surrogate_prob} which does
not emphasize on the value of $H$,
\eqref{eq:new_infeas_rule} seeks to minimize $H$ in an
effort to find a feasible point for \eqref{prob:orig_prob}.
Note that this modification only affects the constants of
$\Tsuff$ prescribed in
Theorem~\ref{thm:my_cov_method}\ref{thm:my_cov_method_tsuff},
thanks to Proposition~\ref{prop:BoundOnTAlgoOne}.

\subsubsection{Tightness of the worst-case analysis}

Lemma~\ref{lem:yudin} recalls a known lower bound on 
the oracle call complexity for constrained, global
optimization~\cite[Sec. 1.6]{yudin}. 
\begin{lem}[\textsc{Worst-case necessary
    budget}]\label{lem:yudin} For any $\nu>0$ and $K>0$, there exist
    twice-differentiable $J\in\FK$ and $H\in\FK$ such that
    any first-order sequential optimization algorithm 
    takes $\left\lceil
\frac{C}{\nu^{(d/2)}}\right\rceil$
    oracle calls to compute a $\left(0,\nu \frac{K \diam(
    \mathcal{X})^2}{8},\nu\frac{K \diam(
    \mathcal{X})^2}{8}\right)$-minimum. Here, $C$ is a positive
constant that is independent $\nu$.  
\end{lem}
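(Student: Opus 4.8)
The plan is to prove this lower bound by an information-theoretic \emph{resisting-oracle} (adversary) argument of the kind standard for $C^{1,1}$ global optimization. I would hide all the difficulty in the objective and keep the constraint trivial: take $H(x)=-1$ for all $x$, which lies in $\FK$ (its gradient is identically zero) and makes $\{H\le 0\}\supseteq \mathcal{X}$. Then \eqref{prob:orig_prob} reduces to minimizing $J$ over $\mathcal{X}$, and every $x\in\mathcal{X}$ automatically satisfies the relaxed constraint $H(x)\le \nu K\diam(\mathcal{X})^2/8$; since $\xopt$ is the unconstrained minimizer over $\mathcal{X}$, the lower bound $0\le J(\hat x)-J(\xopt)$ also holds for free. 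Thus the only binding requirement of a $\left(0,\nu K\diam(\mathcal{X})^2/8,\nu K\diam(\mathcal{X})^2/8\right)$-minimum is the upper bound $J(\hat x)-J(\xopt)\le \nu K\diam(\mathcal{X})^2/8$, and it suffices to force this to fail.

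Next I would build the hard family of objectives. Embed in $\mathcal{X}$ an axis-aligned grid of $N$ disjoint cubic cells of side $s$, with centers $c_1,\dots,c_N$; the count scales as $N=\Theta\!\left((\diam(\mathcal{X})/s)^d\right)$. Fix a radially symmetric \emph{bump} $b$ supported in the ball of radius $r=s/2$ (so it is inscribed in one cell), with $b\equiv 0$ and $\nabla b\equiv 0$ on the boundary sphere, with $\nabla b$ being $K$-Lipschitz, and with minimal value $b(0)=-\mu_0$, where $\mu_0=\Theta(Kr^2)$ is the deepest dip achievable over radius $r$ under a $K$-Lipschitz gradient (e.g.\ a rescaled quartic one-dimensional profile made radial). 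For each cell $j$, set $J_j(x)=b(x-c_j)$, which vanishes outside that cell; matching value and gradient across the sphere shows $J_j\in\FK$ globally, and $\min_{\mathcal{X}}J_j=-\mu_0$ attained at $c_j$. I would choose $r$ so that $\mu_0$ strictly exceeds the target, say $\mu_0=\nu K\diam(\mathcal{X})^2/4$; the depth law $\mu_0=\Theta(Kr^2)$ then forces $r=\Theta(\diam(\mathcal{X})\sqrt{\nu})$ and hence $N=\Theta(\nu^{-d/2})$, which is exactly the claimed $C/\nu^{d/2}$ scaling with $C$ independent of $\nu$.

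Finally I would run the adversary. The (deterministic) algorithm is executed against the oracle that answers every query with $(0,0)$ for $J$ (consistent with the flat function $J_0\equiv 0$) and $(-1,0)$ for $H$, producing queries $q_1,\dots,q_T$ and an output $\hat x$. If $T<N-1$, then by the pigeonhole principle at least two cells contain no query point, and since $\hat x$ lies in the support ball of at most one cell, I can choose an empty cell $j^\star$ with $\hat x\notin B(c_{j^\star},r)$. Every query lies outside the open ball $B(c_{j^\star},r)$ (a point in a different cell is at distance $\ge s/2=r$ from $c_{j^\star}$), where $J_{j^\star}$ and $\nabla J_{j^\star}$ vanish; hence the oracle transcript is \emph{identical} under $J_0$ and under $J_{j^\star}$, so the algorithm issues the same queries and returns the same $\hat x$ on instance $J_{j^\star}$. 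For that instance $J_{j^\star}(\hat x)=0$ while $\min_{\mathcal{X}}J_{j^\star}=-\mu_0$, so $J_{j^\star}(\hat x)-J(\xopt)=\mu_0>\nu K\diam(\mathcal{X})^2/8$, and the target accuracy is not met. Therefore any algorithm succeeding on the whole class must use $T\ge N-1=\lceil C/\nu^{d/2}\rceil$ oracle calls.

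The adversary/consistency bookkeeping is routine; the step I expect to require the most care is constructing the bump $b$ and verifying that the glued objective $J_j$ has a $K$-Lipschitz gradient \emph{globally} (in particular across the support sphere, where value and gradient must join the flat region with the second-derivative bound preserved), together with pinning down the constant in the depth law $\mu_0=\Theta(Kr^2)$ so that the chosen depth yields precisely the count $C/\nu^{d/2}$. A minor bookkeeping point, already handled above by taking $\mu_0$ a constant factor above $\nu K\diam(\mathcal{X})^2/8$, is turning the forced suboptimality into a \emph{strict} violation of the target bound, which only alters the constant $C$.
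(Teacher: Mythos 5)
The paper does not actually prove Lemma~\ref{lem:yudin}; it is recalled from \cite[Sec.~1.6]{yudin} (with the unconstrained analogue \cite{vavasis1995complexity} cited elsewhere), so there is no in-paper argument to compare against. Your resisting-oracle construction --- trivialize the constraint with $H\equiv-1\in\FK$, pack $N=\Theta(\nu^{-d/2})$ disjoint radial bumps of radius $r=\Theta(\diam(\mathcal{X})\sqrt{\nu})$ and depth $\Theta(Kr^2)$, answer every query as if $J\equiv 0$, and swap in the bump of an unvisited cell whose support ball misses the returned point --- is precisely the standard packing argument underlying the cited result. The quantifier bookkeeping, the transcript-indistinguishability step, and the arithmetic $\mu_0=\Theta(Kr^2)\Rightarrow N=\Theta(\nu^{-d/2})$ (with $K$ cancelling, as the scaling of the target accuracy is designed to ensure) all check out. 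Two small points deserve care. First, the glued quartic profile $\phi(t)=-c\,(r^2-t^2)^2$ is $C^{1,1}$ but its second derivative does not vanish at $t=r$, so the resulting $J_j$ is not twice differentiable across the support sphere as the lemma requires; use a profile vanishing to higher order (or a $C^\infty$ bump), which only changes the constant in the depth law. Second, the packing count $N=\Theta\bigl((\diam(\mathcal{X})/s)^d\bigr)$ presumes $\mathcal{X}$ contains a ball of radius proportional to $\diam(\mathcal{X})$; for a degenerate (nearly lower-dimensional) $\mathcal{X}$ the exponent $d/2$ is not attainable, but that is a looseness in the lemma statement itself rather than a gap in your argument.
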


Using Lemma~\ref{lem:yudin}, we conclude that for any
$\eta>0$, there is a problem
instance of \eqref{prob:orig_prob} with twice-differentiable
objective and constraint functions for which
Algorithm~\ref{algo:my_cov_method} needs at
least $\left\lceil \frac{C}{{\left(2\sqrt{2}\right)}^d}\diam(
    \mathcal{X})^d\left(\frac{K
}{\eta}\right)^\frac{d}{2}\right\rceil$ oracle calls to
compute a $\left(0,\eta,\eta\right)$-minimum of
\eqref{prob:orig_prob}.  On the other hand, even for such an
``adversarial'' problem instance,
Algorithm~\ref{algo:my_cov_method} needs at most $
\left\lceil{2{\left(\sqrt{d}\right)}^d{\diam(
\mathcal{X})}^d{\left(\frac{K}{\eta}\right)}^\frac{d}{2}
}\right\rceil + 1$ oracle calls to compute a
$\left(\eta,\eta\right)$-minimum of
\eqref{prob:orig_prob} by
Theorem~\ref{thm:my_cov_method}\ref{thm:my_cov_method_tsuff}, 
provided the user-specified relaxation threshold
$\delta\leq \frac{K\eta^2}{2\GJmax^2}$ and $(H,\delta)$ satisfies Assumption~\ref{assum:good_grad_my_cov_method}.
In other words, $\Tsuff$ prescribed for
Algorithm~\ref{algo:my_cov_method} is sufficient and
necessary (up to constant factors) for a large subclass of
problems of the form \eqref{prob:orig_prob}.

\subsection{Global optimization of \eqref{prob:orig_prob}
    for smooth, strongly-convex $H$
without constraint violation}
\label{sub:CMSE}

Algorithm~\ref{algo:my_cov_method_mu_convex} addresses
Problem~\ref{p_st:algo_safe} to compute a
near-global  minimum for \eqref{prob:orig_prob} without any
constraint violation. It requires the constraint function
$H$ be strongly-convex with a
known convexity constant $\mu>0$ and a feasible initial
solution guess $q_1$. Unlike
Algorithm~\ref{algo:my_cov_method},
Algorithm~\ref{algo:my_cov_method_mu_convex} does not
require Assumption~\ref{assum:good_grad_my_cov_method} or
the $\delta$-relaxation \eqref{prob:orig_prob_relax_delta}.

Algorithm~\ref{algo:my_cov_method_mu_convex} follows a
relax-and-project approach to create a
monotonically-decreasing sequence of outer-approximations
$\{\Htsc \leq 0\}$ and a monotonically-increasing sequence of
inner-approximations $\{H_t^+ \leq 0\}$ of the \emph{a priori}
unknown set $H\leq 0$. Thanks to the $\mu$-convexity of
$H$ and a feasible initial solution guess $q_1$, these 
approximations are non-empty.
Algorithm~\ref{algo:my_cov_method_mu_convex} ensures that 
the queries $q_{t+1}$ are feasible for 
\eqref{prob:orig_prob} for every $t\in\Nint{T-1}$ via a
projection step \eqref{prob:cov_method_scvx_project}. 
Consequently,
Algorithm~\ref{algo:my_cov_method_mu_convex} computes a
$(0,\Deltaglobal,0)$-minimum of \eqref{prob:orig_prob} at
every iteration, and accommodates non-convex, smooth
objective functions, similar to
Algorithm~\ref{algo:my_cov_method}.

\begin{thm}[\textsc{Algorithm~\ref{algo:my_cov_method_mu_convex}
    addresses Problem~\ref{p_st:algo_safe}}]\label{thm:my_cov_method_mu_cvx}
    For any objective function $J$,
    $\mu$-convex constraint function $H$, and feasible initial solution
    $q_1$, the following statements about
    Algorithm~\ref{algo:my_cov_method_mu_convex}:
    {\renewcommand{\theenumi}{\Alph{enumi}}
    \begin{enumerate}
        \item \textsc{(Well-definedness)} The optimization
            problems \eqref{prob:cov_method_scvx_outer} and
            \eqref{prob:cov_method_scvx_project} in
            Algorithm~\ref{algo:my_cov_method_mu_convex} are
            always feasible and have a finite optimal
            solution at all iterations.\label{thm:my_cov_method_mu_cvx_feas_all}
        \item \textsc{(No constraint violation)} All queries
            of Algorithm~\ref{algo:my_cov_method_mu_convex}
            are feasible for \eqref{prob:orig_prob}. 
            \label{thm:my_cov_method_mu_cvx_feas_queries}
        \item \textsc{(Anytime property)} Let
            Algorithm~\ref{algo:my_cov_method} run up to an
            iteration $t\in \mathbb{N}$. Then,
            $\xddagglobal$ is a $(0,\Deltaglobal,0)$-minimum of
            \eqref{prob:orig_prob}.\label{thm:my_cov_method_mu_cvx_anytime}
        \item \textsc{(Worst-case, sufficient budget)}
            For a suboptimality threshold $\eta > 0$, 
            define $\kappa\triangleq L_J\left(\frac{L_H\GJmax}{2L_J\GHmax} +
        \frac{2L_H
        }{\mu}\right)$, and 
    \begin{align}
        \Tsuffsc \triangleq \left\lceil{{\left(\diam(
                \mathcal{X})
\sqrt{d}\right)}^d{\left(\frac{\kappa}{\eta}\right)}^\frac{d}{2}}\right\rceil
+ 1.\end{align}
    Then, Algorithm~\ref{algo:my_cov_method_mu_convex}
    terminates with an $(0,\eta,0)$-minimum of
    \eqref{prob:orig_prob} at some iteration $t\leq
    \Tsuffsc$.  \label{thm:my_cov_method_mu_cvx_tsuff}
    \end{enumerate}
    }
\end{thm}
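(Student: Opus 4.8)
The plan is to dispatch the structural claims~\ref{thm:my_cov_method_mu_cvx_feas_all}--\ref{thm:my_cov_method_mu_cvx_anytime} quickly from Lemma~\ref{lem:approx} and the relax--project construction, and then to concentrate on the budget bound~\ref{thm:my_cov_method_mu_cvx_tsuff}, which is the only part requiring real work.

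For~\ref{thm:my_cov_method_mu_cvx_feas_all}, I would observe that the approximants coincide with $H$ at every past query, so $\Htsc(q_1)=H_t^+(q_1)=H(q_1)\le 0$ for the feasible start $q_1$; hence $q_1$ is feasible for both \eqref{prob:cov_method_scvx_outer} and \eqref{prob:cov_method_scvx_project} at every iteration, and compactness of $\mathcal{X}$ yields finite optima. For~\ref{thm:my_cov_method_mu_cvx_feas_queries}, the projection constraint forces $H_t^+(q_{t+1})\le 0$, and since $H\le H_t^+$ (Lemma~\ref{lem:approx}) we get $H(q_{t+1})\le 0$; together with $H(q_1)\le 0$ this makes every query feasible. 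For~\ref{thm:my_cov_method_mu_cvx_anytime}, $\xddagglobal$ is one of these feasible queries, so $H(\xddagglobal)\le 0$ and $J(\xddagglobal)\ge J(\xopt)$, which supply the two zero slots. The upper bound reuses the covering-method inequality: because $\Htsc\le H$, the true optimum $\xopt$ is feasible for \eqref{prob:cov_method_scvx_outer}, so $J_t^-(\xi_{t+1})\le J_t^-(\xopt)\le J(\xopt)$, whence $\Deltaglobal=J(\xddagglobal)-J_t^-(\xi_{t+1})\ge J(\xddagglobal)-J(\xopt)$.

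For~\ref{thm:my_cov_method_mu_cvx_tsuff}, I would argue by contradiction with the pigeonhole lemma exactly as in Theorem~\ref{thm:my_cov_method}: setting $\epsilon=\eta/\kappa$, Lemma~\ref{lem:pidgeonhole} guarantees some $t\le\Tsuffsc-1$ with $\min_{i\in\Nint{t}}\|q_{t+1}-q_i\|^2\le\eta/\kappa$, so it suffices to prove the per-iteration estimate $\Deltaglobal\le\kappa\min_{i\in\Nint{t}}\|q_{t+1}-q_i\|^2$, since this forces $\Deltaglobal\le\eta$ and early termination by iteration $t$. Writing $\rho=\min_{i\in\Nint{t}}\|q_{t+1}-q_i\|$, I would bound $\Deltaglobal\le J(q_{t+1})-J_t^-(\xi_{t+1})$ and split it as $\big(J(q_{t+1})-J(\xi_{t+1})\big)+\big(J(\xi_{t+1})-J_t^-(\xi_{t+1})\big)\le\GJmax\|q_{t+1}-\xi_{t+1}\|+L_J\min_{i\in\Nint{t}}\|\xi_{t+1}-q_i\|^2$, using \eqref{eq:mvt_cs} on the first term and Lemma~\ref{lem:approx} on the second. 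The target constant $\kappa=\frac{L_H\GJmax}{2\GHmax}+\frac{2L_JL_H}{\mu}$ then decomposes cleanly provided I can establish the two geometric estimates (i) $\|q_{t+1}-\xi_{t+1}\|\le\frac{L_H}{2\GHmax}\rho^2$ and (ii) $\min_{i\in\Nint{t}}\|\xi_{t+1}-q_i\|^2\le\frac{2L_H}{\mu}\rho^2$.

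The heart of the argument, and the main obstacle, is estimate (i), the projection-distance bound, which is precisely what distinguishes Algorithm~\ref{algo:my_cov_method_mu_convex} from the pure covering method: the pigeonhole principle only controls mutual distances between the genuine queries $q_i$, whereas the minorant is evaluated at the relaxed point $\xi_{t+1}$, which the projection step \eqref{prob:cov_method_scvx_project} decouples from $q_{t+1}$. To control $\|q_{t+1}-\xi_{t+1}\|$, I would first quantify how infeasible $\xi_{t+1}$ is for the inner approximation: if $q_j$ is the index active in $\Htsc(\xi_{t+1})\le 0$, then $\ell(\xi_{t+1};q_j,H)\le-\frac{\mu}{2}\|\xi_{t+1}-q_j\|^2$, so the $j$-th quadratic majorant piece gives $H_t^+(\xi_{t+1})\le\frac{L_H-\mu}{2}\|\xi_{t+1}-q_j\|^2$. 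I would then convert this excess infeasibility into a displacement using the ball geometry of that majorant piece (a Euclidean sublevel set of a quadratic with curvature $L_H$ and gradient magnitude at most $\GHmax$), and finally relate $\|\xi_{t+1}-q_j\|$ and the resulting displacement back to the nearest-neighbour distance $\rho$ of the actual query $q_{t+1}$; strong convexity of $H$ is essential here to ensure $\{H_t^+\le 0\}$ does not degenerate near $\xi_{t+1}$. Estimate (ii) is then routine via $\min_i\|\xi_{t+1}-q_i\|\le\|q_{t+1}-\xi_{t+1}\|+\rho$ together with (i) and strong convexity. I also need to handle the trivial case $\xi_{t+1}\in\{H_t^+\le 0\}$, where $q_{t+1}=\xi_{t+1}$ and the projection term vanishes, and to verify that the displacement used in (i) stays inside $\mathcal{X}$, e.g.\ by exhibiting the feasible query $q_j\in\mathcal{X}$ as a competitor in the projection.
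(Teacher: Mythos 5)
Parts~\ref{thm:my_cov_method_mu_cvx_feas_all}--\ref{thm:my_cov_method_mu_cvx_anytime} of your proposal match the paper's argument essentially verbatim, and for part~\ref{thm:my_cov_method_mu_cvx_tsuff} you correctly isolate the decomposition $\Deltaglobal\leq \GJmax\|q_{t+1}-\xi_{t+1}\| + L_J\min_{i\in\Nint{t}}\|\xi_{t+1}-q_i\|^2$ and the two estimates (i)--(ii), with the right constants, that would finish the proof via Lemma~\ref{lem:pidgeonhole}. The gap is in your route to the projection-distance estimate (i), which you rightly call the heart of the argument. You propose to bound the excess $H_t^+(\xi_{t+1})\leq\tfrac{L_H-\mu}{2}\|\xi_{t+1}-q_j\|^2$ and convert it into a displacement via the ball geometry \eqref{eq:union_of_balls} of a single majorant piece. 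Two things go wrong. First, writing the $j$-th ball as $\mathrm{Ball}(c_j,r_j)$, the conversion gives $\|q_{t+1}-\xi_{t+1}\|\leq(\|\xi_{t+1}-c_j\|^2-r_j^2)/(\|\xi_{t+1}-c_j\|+r_j)$, and the only available lower bound on the denominator is of order $r_j\geq\|\nabla H(q_j)\|/L_H$, a \emph{local} quantity that can be arbitrarily small (e.g.\ when a query sits near the minimizer of $H$); it cannot be replaced by $\GHmax$, since $\GHmax$ \emph{upper}-bounds $\|\nabla H(q_j)\|$ and the inequality therefore runs the wrong way for the target constant $\tfrac{L_H}{2\GHmax}$. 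Second, your bound is expressed in $\|\xi_{t+1}-q_j\|$, whereas Lemma~\ref{lem:pidgeonhole} only controls $\rho=\min_i\|q_{t+1}-q_i\|$; converting one to the other by the triangle inequality reintroduces $\|q_{t+1}-\xi_{t+1}\|$, the very quantity you are bounding, so the argument as sketched is circular. (For the same reason your claim that estimate (ii) is then ``routine'' does not yield the constant $\tfrac{2L_H}{\mu}$ without an additional smallness assumption on $\rho$.)

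The paper closes this loop by a different mechanism, which you would need (or a substitute for): it compares $\Htsc(\xi_{t+1})\leq 0=H_t^+(q_{t+1})$ \emph{piece by piece at a common index} $q_i$ (the equality holds because a non-trivial projection lands on $\{H_t^+=0\}$, the state constraint being inactive since $\{H_t^+\leq 0\}\subseteq\{H\leq 0\}\subseteq\mathcal{X}$); the linear terms $\ell(\cdot\,;q_i,H)$ cancel and leave $\mu\|\xi_{t+1}-q_i\|^2\leq 2\nabla H(q_i)\cdot(q_{t+1}-\xi_{t+1})+L_H\|q_{t+1}-q_i\|^2$, i.e.\ inequality \eqref{eq:H_ineq}, which bounds $\min_i\|\xi_{t+1}-q_i\|^2$ jointly by $\|q_{t+1}-\xi_{t+1}\|$ and $\rho^2$. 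Combined with the competitor bound $\|q_{t+1}-\xi_{t+1}\|\leq\min_i\|\xi_{t+1}-q_i\|$ (every $q_i$ is feasible for \eqref{prob:cov_method_scvx_project} --- the observation you relegate to a side remark about $\mathcal{X}$-membership), this yields the self-referential relation $z\leq\sqrt{2az+b}$ with $z=\|q_{t+1}-\xi_{t+1}\|$, $a=\GHmax/\mu$, $b=\tfrac{L_H}{\mu}\rho^2$, from which (i) and (ii) both follow. One caution if you adopt this route: the relation $z^2\leq 2az+b$ only forces $z\leq a+\sqrt{a^2+b}$, not $z\leq\sqrt{a^2+b}-a$ as asserted before \eqref{eq:projection_ub}, so that final step should be re-derived rather than quoted.
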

\begin{proof}
    \emph{Proof of~\ref{thm:my_cov_method_mu_cvx_feas_all})}
    The optimization problems
    \eqref{prob:cov_method_scvx_outer} and
    \eqref{prob:cov_method_scvx_project} always admit $q_1$
    as a feasible solution $H(q_1)=\Htsc(q_1)=H_t^+(q_1)$,
    and therefore have a non-empty feasible solution space.
    Furthermore, since $ \mathcal{X}$ is compact, these
    optimization problems have a well-defined global minima.

    \emph{Proof
    of~\ref{thm:my_cov_method_mu_cvx_feas_queries})} 
    The proof of feasibility of $q_{t+1}$ for
    \eqref{prob:orig_prob} at every iteration $t \in
    \Nint{T-1}$ follows from the observation that
    $H(q_{t+1})\leq H_t^+(q_{t+1}) \leq 0$ by
    \eqref{prob:cov_method_scvx_project} and
    Lemma~\ref{lem:approx}. 

    \emph{Proof
    of~\ref{thm:my_cov_method_mu_cvx_anytime})} Since
    $\Htsc\leq H$, \eqref{prob:cov_method_scvx_outer} is a
    relaxation of \eqref{prob:orig_prob} with the constraint
    $H\leq 0$ relaxed to $\Htsc\leq 0$.  Consequently, we
    have the following inequality at every iteration
    $t\in\Nint{T-1}$ (similar to
    \eqref{eq:mycovmethod_ineq}),
    \begin{align}
        J_t^-(\xi_{t+1}) \leq J_t^-(\xopt) \leq J(\xopt) \leq J(\xddagglobal)
        \leq \min_{i\in\Nint{t+1}}J(q_i) \leq J(q_{t+1}).\label{eq:global_estim_bounds}
    \end{align}
    From \eqref{eq:global_estim_bounds}, 
    we have the following bounds on the true global
    suboptimality,
    \begin{align}
        0\leq J(\xddagglobal) - J(\xopt)&\leq \Deltaglobal
        \triangleq \min_{i\in\Nint{t+1}}J(q_i) -
        J_t^-(\xi_{t+1}).\label{eq:mycovmethod_scvx_global_ineq}
    \end{align}
    Thus, $\xddagglobal$ is a $(0,\Deltaglobal,0)$-minimum
    of \eqref{prob:orig_prob} at every iteration since
    $H(\xddagglobal)\leq 0$ by
    Theorem~\ref{thm:my_cov_method_mu_cvx}\ref{thm:my_cov_method_mu_cvx_feas_queries}
    and \eqref{eq:cov_method_scvx_min}.

    \emph{Proof
    of~\ref{thm:my_cov_method_mu_cvx_tsuff})}
    We seek a lower bound on the budget $T$ of oracle calls,
    which ensures $\Deltaglobal \leq \eta$. 
    Using mean value theorem, Lemma~\ref{lem:approx}, the definition of 
    $\GJmax$, and
    \eqref{eq:global_estim_bounds}, we characterize the following upper
    bound on $\Deltaglobal$,
    \begin{align}
        \Deltaglobal&\leq J(q_{t+1}) -
        J_t^-(\xi_{t+1}) \nonumber \\
                    &= J(q_{t+1}) - \max\limits_{i\in
            \Nint{t}}\left(\ell(\xi_{t+1};q_i,J)-\frac{L_J}{2} \|
            \xi_{t+1} -
   q_i\|^2\right) \nonumber \\
                    &= J(q_{t+1}) - J(\xi_{t+1}) +
                    J(\xi_{t+1}) - \max\limits_{i\in
                    \Nint{t}}\left(\ell(\xi_{t+1};q_i,J)-\frac{L_J}{2}
                \| \xi_{t+1} - q_i\|^2\right) \nonumber \\
                    &\leq \GJmax\|q_{t+1} -
                    \xi_{t+1}\| + L_J
                    \min_{i\in\Nint{t}}\|\xi_{t+1} - q_i\|^2.\label{eq:simple_deltaglobal_ub}
    \end{align}
    We will upper bound
    \eqref{eq:simple_deltaglobal_ub} using
    $\min_{i\in\Nint{t}} \|q_{t+1} - q_i\|^2$ to complete
    the proof using Lemma~\ref{lem:pidgeonhole}.
    
    For the given budget $T$, define $\Pi_T\subseteq
    \Nint{T-1}$ as the finite set of iterations where
    $q_{t+1} =  \xi_{t+1}$, i.e.,
    \eqref{prob:cov_method_scvx_project} resulted in a
    trivial projection. At any iteration $t$, we have two
    cases --- $t\in\Pi$ or $t\not\in\Pi$. For the first
    case, we have $q_{t+1} =  \xi_{t+1}$, which implies
    \begin{align}
        \Deltaglobal&\leq L_J
        \min_{i\in\Nint{t}} \|q_{t+1} -
        q_i\|^2,\label{eq:upper_bound_scenario_1}.
    \end{align}
    
    We now consider the second case --- $t\not\in\Pi$, where
    \eqref{prob:cov_method_scvx_project} generates a
    non-trivial projection point $q_{t+1}\neq \xi_{t+1}$.
    Here, we will upper bound the terms in
    \eqref{eq:simple_deltaglobal_ub} separately to
    characterize the sufficient budget. We will show that
    the proposed upper bound for the second case subsumes
    \eqref{eq:upper_bound_scenario_1} to complete the proof
    using Lemma~\ref{lem:pidgeonhole} and
    \eqref{eq:mycovmethod_scvx_global_ineq}.
    
    First, we show the following upper bound to the second term in
    \eqref{eq:simple_deltaglobal_ub},
    \begin{align}
        \min_{i\in\Nint{t}}\|\xi_{t+1} -
        q_i\|^2 &\leq \frac{2\GHmax}{\mu}\|q_{t+1} -
        \xi_{t+1}\| + \frac{L_H}{\mu}\min_{i\in\Nint{t}}\|q_{t+1} - q_i\|^2.\label{eq:H_ineq}
    \end{align}
    To prove \eqref{eq:H_ineq}, we first recall that
    $\Htsc(\xi_{t+1})\leq 0 = H_t^+(q_{t+1})$  from
    \eqref{prob:cov_method_scvx_outer} and
    \eqref{prob:cov_method_scvx_project} at every
    $t\not\in\Pi$. Consequently, 
    \begin{alignat}{3}
                 & & \Htsc(\xi_{t+1})\triangleq &\max_{i\in\Nint{t}}
                \Big(\ell(\xi_{t+1};q_i, H) + \frac{\mu}{2}\|\xi_{t+1} -
                    q_i\|^2\Big) \nonumber \\
                                 & & &\leq H_t^+(q_{t+1}) \triangleq 
                    \min_{i\in\Nint{t}}
                    \left(\ell(q_{t+1};q_i, H)
                        + \frac{L_H}{2}\|q_{t+1} -
q_i\|^2\right) \nonumber \\
                \Rightarrow\ & \forall i\in\Nint{t},\ &\mu\|\xi_{t+1} -
        q_i\|^2 &\leq 2\nabla H(q_i) \cdot (q_{t+1} -
        \xi_{t+1}) 
                + L_H\|q_{t+1} - q_i\|^2, \nonumber \\
                \Rightarrow\ & \forall i\in\Nint{t},\ &\mu\|\xi_{t+1} -
        q_i\|^2 &\leq 2\|\nabla H(q_i)\|\|q_{t+1} -
        \xi_{t+1}\| + L_H\|q_{t+1} - q_i\|^2, \nonumber \\
                \Leftrightarrow\ &\forall i\in\Nint{t},\ &\|\xi_{t+1} -
                q_i\|^2 &\leq \frac{2\GHmax}{\mu}\|q_{t+1} -
                \xi_{t+1}\| + \frac{L_H}{\mu}\|q_{t+1} -
                q_i\|^2 \Rightarrow \eqref{eq:H_ineq}.
        \nonumber
    \end{alignat}%
    Here, we used Lemma~\ref{lem:approx}, $\mu$-convexity of
    $H$ \eqref{eq:approx_sc}, Cauchy-Schwartz inequality,
    and the definition of $\GHmax$. 
    Substituting \eqref{eq:H_ineq} in
    \eqref{eq:simple_deltaglobal_ub}, we have
    \begin{align}
        \Deltaglobal&\leq \left({\GJmax +
        \frac{2L_J\GHmax}{\mu}}\right)\|q_{t+1} -
        \xi_{t+1}\| + \frac{L_J
        L_H}{\mu}\min_{i\in\Nint{t}}\|q_{t+1} -
        q_i\|^2.\label{eq:simple_deltaglobal_ub_2}
    \end{align}

    Next, we characterize an upper bound for the projection
    distance $\|q_{t+1} - \xi_{t+1}\|$ at every such
    iteration $t\not\in\Pi$. From
    \eqref{prob:cov_method_scvx_project}, we have $\|
    \xi_{t+1} - q_{t+1}\| \leq \| \xi_{t+1} - q_{i}\|$ for
    every $i\in\Nint{t}$ and $t\not\in\Pi$. Consequently,
    $\| \xi_{t+1} - q_{t+1}\| \leq \sqrt{\min_{i\in\Nint{t}} \| \xi_{t+1} -
    q_{i}\|^2}$. Using \eqref{eq:H_ineq},
    \begin{align}
        \|q_{t+1} - \xi_{t+1}\| \leq
        \sqrt{\frac{2\GHmax}{\mu}\|q_{t+1} - \xi_{t+1}\| +
        \frac{L_H}{\mu}\min_{i\in\Nint{t}}\|q_{t+1} -
    q_i\|^2}.\label{eq:min_xi_q_ub}
    \end{align}
    Recall that the maximum value of a positive $z$ that
    satisfies the inequality $z\leq \sqrt{2az + b}$ for any
    $a,b>0$ occurs at $z=\sqrt{a^2 + b} - a$. For
    a fixed $a>0$, the function $f_a(b)=\sqrt{a^2 + b} - a$ is
    concave in $b$. Consequently, $z \leq f_a(b)\leq
    \ell(b;0,f_a)=\frac{b}{2a}$, where $\ell(b;0,f_a)$ is
    the first-order approximation of $f_a$ about $b=0$ for
    some fixed $a$. For $a=\frac{\GHmax}{\mu}$ and
    $b=\frac{L_H}{\mu}\min_{i\in\Nint{t}}\|q_{t+1} -
    q_i\|^2$ in \eqref{eq:min_xi_q_ub}, we have
    \begin{align}
        \|q_{t+1} - \xi_{t+1}\| &\leq \sqrt{a^2 + b} - a 
        \leq \frac{b}{2a}\leq \frac{L_H}{2\GHmax}\min_{i\in\Nint{t}}\|q_{t+1} -
    q_i\|^2.\label{eq:projection_ub}
    \end{align}
    Finally, substituting \eqref{eq:projection_ub} into
    \eqref{eq:simple_deltaglobal_ub_2}, we obtain
    \begin{align}
        \Deltaglobal&\leq \kappa\min_{i\in\Nint{t}}\|q_{t+1} -
        q_i\|^2,\text{ with }\kappa\triangleq {L_J\left(\frac{L_H\GJmax}{2L_J\GHmax} +
        \frac{2L_H
}{\mu}\right)}.\label{eq:simple_deltaglobal_ub_3}
    \end{align}
    The upper bound \eqref{eq:simple_deltaglobal_ub_3} also
    upper bounds \eqref{eq:upper_bound_scenario_1}, since
    $\frac{\kappa}{L_J} \geq \frac{2L_H
    }{\mu}\geq 2$.
    Therefore, we can guarantee
    $\Deltaglobal\leq \eta$, when $T$ is chosen such that
    for some $t\in\Nint{T-1}$, 
    $\min_{i\in\Nint{t}}\|q_{t+1} - q_i\|^2 \leq
    \frac{\eta}{\kappa}$. We complete the proof using
    Lemma~\ref{lem:pidgeonhole} and
    \eqref{eq:mycovmethod_scvx_global_ineq}.
\end{proof}

\subsubsection{Tightness of the worst-case analysis}

Proposition~\ref{prop:my_cov_method_mu_convex_is_tight}
shows that Algorithm~\ref{algo:my_cov_method_mu_convex} is
\emph{worst-case} optimal in the user-specified
suboptimality threshold $\eta$. We only focus on first-order
sequential optimization algorithms that can solve
\eqref{prob:orig_prob} with $\mu$-convex $H$ and guarantee
no constraint violation.

\begin{prop}\label{prop:my_cov_method_mu_convex_is_tight}
    For every first-order sequential optimization algorithm
    that solves \eqref{prob:orig_prob} without producing any
    queries in the infeasible set $H>0$, $K_J\geq 0$, and
    $L_H\geq 0$
    there is a problem instance of \eqref{prob:orig_prob}
    with twice-differentiable, smooth $J \in \FKJ$ and $\mu$-convex,
    smooth $H\in \FHmu$ such that the algorithm requires at least
    $\left\lceil{
{\left(\frac{K_J}{\eta}\right)}^\frac{d}{2}}C'\right\rceil$  queries
    to compute an $(0,\eta,0)$-minimum. Here, $C'$ is a
    positive constant independent of true Lipschitz gradient constant
    of the objective function $K_J$ and $\eta$.
\end{prop}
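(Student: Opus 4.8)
The plan is to reduce the constraint-violation-free setting to the known lower bound for \emph{unconstrained} global optimization of a smooth function over a convex body, recorded in a related form in Lemma~\ref{lem:yudin}. The key observation is that when the feasible region $\{H\le 0\}$ is a full-dimensional Euclidean ball and the hard instance's optimum is planted strictly in its interior, a constraint-violation-free algorithm is forced to search the entire ball and gains nothing from the constraint, so the problem is at least as hard as the corresponding instance of \eqref{prob:orig_prob_noH}.

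First I would fix the constraint once and for all. Take $H(x)=\tfrac{\mu}{2}\big(\|x-x_0\|^2-R^2\big)$ for a center $x_0$ and radius $R$ chosen so that the closed ball $\bar B(x_0,R)$ lies inside $\mathcal{X}$. Then $H$ is twice-differentiable, $\mu$-convex, and $L_H$-smooth (its Hessian is $\mu I$ with $\mu\le L_H$), so $H\in\FHmu$, and its zero-sublevel set is exactly $\bar B(x_0,R)$. Since the algorithm is promised never to query $\{H>0\}$, every query lies in $\bar B(x_0,R)$; hence the algorithm is, verbatim, a first-order sequential optimization algorithm for \eqref{prob:orig_prob_noH} with $\mathcal{X}$ replaced by the convex body $\bar B(x_0,R)$.

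Next I would invoke the classical resisting-oracle construction underlying Lemma~\ref{lem:yudin}. Given the algorithm, run it for $N\triangleq\lceil C'(K_J/\eta)^{d/2}\rceil-1$ steps while the adversary answers every query consistently with a flat reference function. By a packing argument over $\bar B(x_0,R)$, excluding a boundary collar of width comparable to the bump radius $r\sim\sqrt{\eta/K_J}$, one can guarantee an interior point $x^\ast$ at distance at least $r$ from all $N$ queries and from $\partial\bar B(x_0,R)$. The adversary then reveals $J$ to be the reference plus a smooth, compactly supported bump of depth $\approx-2\eta$ centered at $x^\ast$; the $K_J$-Lipschitz-gradient constraint caps the depth-to-width ratio, which is precisely what forces $r\gtrsim\sqrt{\eta/K_J}$ and hence $N=\Omega\big((R^2K_J/\eta)^{d/2}\big)$. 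Both the reference and the bumped $J$ lie in $\FKJ$, are twice-differentiable, and agree in value and gradient at all queries, so they are indistinguishable to the algorithm. Because $x^\ast$ is interior, the minimizer of the bumped $J$ over $\bar B(x_0,R)$ is $x^\ast$ itself, so $\xopt=x^\ast$ is feasible and any $(0,\eta,0)$-minimum must have objective within $\eta$ of $J(x^\ast)$, which is unattainable from the flat information seen.

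The step I expect to be the main obstacle is the bookkeeping that keeps the planted optimum strictly \emph{interior}. Unlike the unconstrained lower bound, here I must simultaneously (i) place the bump so that its support stays inside $\bar B(x_0,R)$, so that the constrained optimum coincides with the bump minimum and feasibility of $x^\ast$ is automatic, and (ii) absorb the resulting boundary-collar loss, together with the packing constant, the radius $R$, and the normalization mismatch between Yudin's scaling and ours, into the single $d$-dependent constant $C'$ that remains independent of $K_J$ and $\eta$. Once the interior placement is secured, the indistinguishability and depth--width estimates are exactly those of the unconstrained case, and the bound $N\ge\lceil C'(K_J/\eta)^{d/2}\rceil$ follows by contradiction with $(0,\eta,0)$-optimality.
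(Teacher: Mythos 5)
Your proposal is correct and follows essentially the same route as the paper: observe that a constraint-violation-free algorithm only ever queries inside the convex, compact feasible set $\{H\leq 0\}\cap\mathcal{X}$, so it is verbatim an algorithm for \eqref{prob:orig_prob_noH} over that set, and then apply the known first-order lower bound for smooth objectives over a known convex body. The only difference is that the paper invokes this lower bound as a black box (citing \cite[Thm.~4]{vavasis1995complexity}) for a general $\mu$-convex $H$, whereas you instantiate $H$ as a quadratic whose sublevel set is a ball and re-sketch the resisting-oracle bump construction yourself.
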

\begin{proof}
    We first note that, under the given assumptions, 
    \eqref{prob:orig_prob} is equivalent to the following
    optimization problem,
    \begin{align}
        \mathrm{minimize}\ J(x)\quad\mathrm{subject\
        to}\ x \in
        \mathcal{Y}\triangleq(\{H\leq 0\}\cap
        \mathcal{X})\label{prob:orig_prob_Y}.
    \end{align}
    By definition, the set $\mathcal{Y}$ is closed (since
    $H$ is continuous), convex (since $H$ is convex), and
    bounded (since $H$ is $\mu$-convex with $\mu>0$) and $
    \mathcal{X}$ is convex and compact. Thus,
    \eqref{prob:orig_prob_Y} is similar to
    \eqref{prob:orig_prob_noH} with $ \mathcal{X}$
    restricted to \emph{a priori unknown}, convex, and
    compact set $ \mathcal{Y}$.
    Recall
    that for every first-order sequential optimization
    algorithm designed to solve \eqref{prob:orig_prob_noH}
    when $ \mathcal{Y}$ is \emph{known}, there is a
    twice-differentiable, smooth objective function $J$ for
    which the algorithm takes $\left\lceil{
    {\left(\frac{K_J}{\eta}\right)}^\frac{d}{2}}C'\right\rceil$
    queries to compute an $(0,\eta,0)$-minimum for some
    positive constant $C'>0$~\cite[Thm.
    4]{vavasis1995complexity}. Clearly, the \emph{necessary}
    bound must also hold for the case where the set $
    \mathcal{Y}$ is \emph{a priori unknown}, and the algorithms
    query only within the set $ \mathcal{Y}$. This 
    completes the proof.  
\end{proof}

From
Theorem~\ref{thm:my_cov_method_mu_cvx}\ref{thm:my_cov_method_mu_cvx_tsuff}
and Proposition~\ref{prop:my_cov_method_mu_convex_is_tight},
the sufficient budget $\Tsuffsc$ is necessary and sufficient
(up to a constant factor) to address
Problem~\ref{p_st:algo_safe}, irrespective of the choice of
the objective and the constraint function or the feasible
initial solution.

\subsubsection{Is strong-convexity of $H$ necessary in
Problem~\ref{p_st:algo_safe}?}

Assume, for contradiction, that there is some first-order
sequential optimization algorithm $\mathscr{A}$ that solves
\eqref{prob:orig_prob} without any constraint violation and
requiring $H$ to be only smooth and not necessarily
strongly-convex.
By Whitney's theorem, for
every constraint function $H$ and the associated
sequence of \emph{feasible} iterates
${\{q_t\}}_{t\in\Nint{T}}$ generated by $\mathscr{A}$, there
exists a constraint function $H'\in\FH$ such that the
first-order oracles of $H$ and $H'$ agree at all iterations
$i\in\Nint{t-1}$ for some $t\in\Nint{T}$, but $H'(q_t) > 0
\geq H(q_t)$. In other words, there always exist a problem
instance for which $\mathscr{A}$ will violate the
constraint at iteration $t$, specifically the problem
instance \eqref{prob:orig_prob} with $H'$ as the constraint
function instead of $H$.

To show that Algorithm~\ref{algo:my_cov_method_mu_convex}
can fail to solve \eqref{prob:orig_prob} when $H$ is just
convex, but not strongly-convex, consider the constraint
function as the \emph{zero function} $H\triangleq 0$. In
this case, every iterate of
Algorithm~\ref{algo:my_cov_method_mu_convex} is $q_1$, the
user-provided initial solution guess, since the set
$\{H_t^+(x)\leq 0\}=\{q_1\}$ at every iteration $t\in\Npos$.

\subsection{Tractable implementation of
    Algorithms~\ref{algo:my_cov_method}
and~\ref{algo:my_cov_method_mu_convex}}

Algorithms~\ref{algo:my_cov_method}
and~\ref{algo:my_cov_method_mu_convex} require global
optimization of \emph{non-convex} optimization problems
\eqref{prob:relax_surrogate_prob},
\eqref{prob:cov_method_scvx_outer}, and
\eqref{prob:cov_method_scvx_project}. We now discuss
tractable approaches to solve these optimization problems.

\newcommand{\cmoeOriginal}{
        \arraycolsep=3pt
        \def\arraystretch{1.5}
        \begin{equation}
            \small
            \hskip-0.5em\boxed{\begin{array}{rl}
                \underset{x}{\mathrm{min.}}&\max\limits_{i\in\Nint{t}}
            \left({\ell(x;q_i,J) - \frac{L_J}{2}} \|x -
            q_i\|^2\right)\\
                \mathrm{s.\ t.}&x\in \mathcal{X},\\
                               &\max\limits_{j\in\Nint{t}}
                               \left({\ell(x;q_j,H) -
            \frac{L_H}{2} \|x - q_j\|^2}\right)\leq 0
        \end{array}} \nonumber
        \end{equation}
}
\newcommand{\cmpeOriginal}{
        \arraycolsep=3pt
        \def\arraystretch{1.5}
        \begin{equation}
            \boxed{
            \small
            \hskip-0.5em\begin{array}{rl}
            \underset{x}{\mathrm{min.}}&\max\limits_{i\in\Nint{t}}
            \left({\ell(x;q_i,J) - \frac{L_J}{2}} \|x -
            q_i\|^2\right)\\
                \mathrm{s.\ t.}&x\in \mathcal{X},\\
                               &\max\limits_{j\in\Nint{t}}
                               \left({\ell(x;q_j,H) +
            \frac{\mu}{2} \|x - q_j\|^2}\right)\leq 0
        \end{array}} \nonumber
        \end{equation}
}
\newcommand{\cmoeEpigraphProblemArray}{
    \begin{array}{rl}
        \underset{x,u}{\mathrm{minimize}}& L_J\left(u -
            \frac{x\cdot x}{2}\right) \\
            \mathrm{subject\ to}& u\in \mathbb{R},\ x\in \mathcal{X}\\
            \forall i \in \Nint{t},&
            \frac{\ell(x;q_i,J)}{L_J} +
            \ell(x;q_i,Q) \leq u\\
            \forall j \in \Nint{t},&
            \frac{\ell(x;q_j,H)}{L_H} +
            \ell(x;q_j,Q) \leq \frac{x\cdot x}{2}\\
    \end{array}
}
\newcommand{\cmoeEpigraph}{
        \arraycolsep=3pt
        \def\arraystretch{1.5}
        \begin{equation}
            \small
            \boxed{\cmoeEpigraphProblemArray} \nonumber %
        \end{equation}
}
\newcommand{\cmpeEpigraphProblemArray}{
    \begin{array}{rl}
        \underset{x,u}{\mathrm{minimize}}& L_J\left(u -
            \frac{x\cdot x}{2}\right) \\
        \mathrm{subject\ to}& u\in \mathbb{R},\  x\in \mathcal{X}\\
        \forall i \in \Nint{t},&
        \frac{\ell(x;q_i,J)}{L_J} +
        \ell(x;q_i,Q) \leq u\\
        \forall j \in \Nint{t}, & \frac{\ell(x;q_j,H)}{\mu}
        - \ell(x;q_j,Q) + \frac{x \cdot x}{2}\leq 0\\
    \end{array}
}
\newcommand{\cmpeEpigraph}{
        \arraycolsep=3pt
        \def\arraystretch{1.5}
        \begin{equation}
            \small
            \boxed{\cmpeEpigraphProblemArray} \nonumber %
        \end{equation}%
}

\begin{figure}[!h]
\adjustbox{width=1\linewidth}{
\begin{tabular}{p{0.5\linewidth}   p{0.5\linewidth}}
    {\hfill \textbf{\eqref{prob:relax_surrogate_prob} for
    Algorithm~\ref{algo:my_cov_method}}\hfill} & {\hfil
\textbf{\eqref{prob:cov_method_scvx_outer} for
Algorithm~\ref{algo:my_cov_method_mu_convex}}\hfill}\\[2ex]
    \multicolumn{2}{c}{$\Updownarrow\hspace*{1em}$
    {\small
    \emph{Expand $J_t^-, H_t^-$ and $\Htsc$ using
Lemma~\ref{lem:approx} and
\eqref{eq:approx_sc}}}
$\hspace*{1em}\Updownarrow$}\\
\cmoeOriginal & \cmpeOriginal \\
\multicolumn{2}{c}{$\Updownarrow$\hspace*{1em}
    \small
        \begin{tabular}{c}
        \emph{Define $Q(x)\triangleq\frac{x\cdot x}{2}$,
            which implies $Q(x) -
        \ell(x;q_i,Q) = \frac{\|x - q_i\|^2}{2}$;}\\
        \emph{Piecewise-linear minimization~\cite[Sec.
            4.3.1]{BoydConvex2004} via epigraph formulation}
       \end{tabular}
    \hspace*{1em}$\Updownarrow$}\\
    \cmoeEpigraph & \cmpeEpigraph \\
\end{tabular}}
\caption{Reformulation of \eqref{prob:relax_surrogate_prob}
    and \eqref{prob:cov_method_scvx_outer}. Both of the
    optimization problems minimizes a concave quadratic
    function subject to convex constraints from $
    \mathcal{X}$, $t$ linear constraints, and $t$
    quadratic constraints. While the quadratic constraints
    in \eqref{prob:relax_surrogate_prob} are non-convex, the
quadratic constraints in \eqref{prob:cov_method_scvx_outer}
are convex.}\label{fig:reformulation}
\end{figure}

Figure~\ref{fig:reformulation} sketches a reformulation of
\eqref{prob:relax_surrogate_prob} and
\eqref{prob:cov_method_scvx_outer}. The resulting problems
are non-convex, quadratically-constrained quadratic programs, when the constraint $x\in\mathcal{X}$ can
be expressed as a collection of linear/second-order cone
constraints. We utilize \texttt{GUROBI}, a commercial off-the-shelf
solver, to solve such 
problems. \texttt{GUROBI} can tackle
\eqref{prob:relax_surrogate_prob} and
\eqref{prob:cov_method_scvx_outer} via spatial branching~\cite{GUROBI}. The
reformulation of \eqref{prob:relax_surrogate_prob} also
shows that Algorithm~\ref{algo:my_cov_method} simplifies to
a \emph{minimax space-filling} design-based
optimization~\cite{pronzato_minimax_2017} for very large
$L_J$ and $L_H$.

The optimization problem
\eqref{prob:cov_method_scvx_project} seeks the projection of
a point $\xi_{t+1}\in \mathcal{X}$ onto the set $\{H_t^+
\leq 0\}$ at every iteration $t\in \Nint{T}$. From
\eqref{eq:major_f} and simple algebraic manipulations, we
see that 
\begin{align}
    \{H_t^+ \leq 0\}= \bigcup_{i\in\Nint{t}}
    \mathrm{Ball}\left(q_i - \frac{\nabla H(q_i)}{L_H},
    \sqrt{\frac{\|\nabla H(q_i)\|^2}{L_H^2} - \frac{2
H(q_i)}{L_H}} \right)\label{eq:union_of_balls}
\end{align}
Consequently, we can solve
\eqref{prob:cov_method_scvx_project} \emph{exactly} in two
steps: 1) compute the projection point of $\xi_{t+1}$ onto
the $t$ balls separately (available in closed-form), and 2)
choose among the $t$ projection points, the point closest to
$\xi_{t+1}$ via a finite minimum operation.
Recall that for any $z \in \mathcal{X}$, 
the projected point is $z$ if
$z\in\mathrm{Ball}(c,r)$, otherwise the projected point is $c +
\frac{r}{\|z - c\|}(z - c)$.

\section{Numerical experiments}
\label{sec:num}

We used Python to perform all computations on an Intel
i7-4600U CPU with 4 cores, 2.1GHz clock rate and 7.5 GB RAM. 

\subsection{Benchmarking against existing approaches:
Solution quality and scalability} 
\label{sub:benchmark}

\newcommand{\functionwidth}{0.63\linewidth}
\begin{table}[h]
    \small
    \centering
    \begin{tabular}{ccccc}
    \toprule
    Problem & $J$ & $H$ & Infeasible $q_1$ & Feasible $q_1$ \\
    \midrule
    P1 & \texttt{Br}   & \multirow{2}{*}{\texttt{SinQ}}  &
    \multirow{2}{*}{$\left(-{\sqrt{\frac{40
    \pi}{3}},\sqrt{-\frac{40
    \pi}{3}}}\right)$} & \multirow{2}{*}{$\left({\sqrt{\frac{20
    \pi}{3}},\sqrt{\frac{20
    \pi}{3}}}\right)$} \\
    P2 & \texttt{MBr}  & & & \\
    P3 & \texttt{Br}   & \multirow{2}{*}{\texttt{MBr}} &
    \multirow{2}{*}{(5.5, -9)} & \multirow{2}{*}{(0, 10)}\\
    P4 & \texttt{MBr}  & & &\\
    P5 & \texttt{Br}   & \multirow{2}{*}{\texttt{InvBowl}} &
    \multirow{2}{*}{$c_\mathrm{bowl} +
    \frac{R_\mathrm{bowl}}{2}\left(\frac{1}{\sqrt{2}},\frac{1}{\sqrt{2}}\right)$} & \multirow{2}{*}{(5.5,
-9)}\\
    P6 & \texttt{MBr}  & & &\\
    P7 & \texttt{Br}   & \multirow{2}{*}{\texttt{Bowl}} &
    \multirow{2}{*}{(5.5, -9)} & \multirow{2}{*}{$c_\mathrm{bowl} +
    \frac{R_\mathrm{bowl}}{2}\left(\frac{1}{\sqrt{2}},\frac{1}{\sqrt{2}}\right)$}\\
    P8 & \texttt{MBr}  & & &\\
    \bottomrule
    \end{tabular} 
    \ \\\vspace*{1em}
    \begin{tabular}{p{0.135\linewidth}ccc}
    \toprule
    Name  & Function & $K_f$ & $L_f$ \\[1ex]
    \midrule
    Branin (\texttt{Br}) & 
    \begin{minipage}[t]{\functionwidth}
        \ \\[-2.5ex]
        $\texttt{Br}(x_1,x_2) = (x_2 - \frac{5.1}{4\pi^2} x_1^2 + \frac{5}{\pi} x_1 -
        6)^2 + 10(1-\frac{1}{8\pi}) \cos(x_1) + 10$.       
    \end{minipage} & $\approx65$ & $75$ \\[1ex] \hline
    Modified Branin~(\texttt{MBr}) &  
    \begin{minipage}[t]{\functionwidth}
        $\texttt{MBr}(x_1,x_2) =
    \texttt{Br}(x_1,x_2) + 20 x_1 - 30 x_2$.       
\end{minipage} & $\approx65$ & $75$ \\ \hline
    Bowl (\texttt{Bowl}) &  
    \begin{minipage}[t]{\functionwidth}
        \ \\[-2.5ex]
        $\texttt{Bowl}(x_1,x_2) = \frac{1}{2}\left(\|x - c_\mathrm{bowl}\|^2 - R_\mathrm{bowl}^2\right)$ where
        $R_\mathrm{bowl}=10$ and $c_\mathrm{bowl}=[-3,-3]$.       
    \end{minipage} & $1$ & $2$ \\[3.5ex] \hline
    Inverted bowl (\texttt{InvBowl}) &  
    \begin{minipage}[t]{\functionwidth}
        $\texttt{InvBowl}(x_1,x_2) = - \texttt{Bowl}(x_1,
        x_2)$.       
\end{minipage} & $1$ & $2$ \\ \hline
    Sine-quadratic (\texttt{SinQ}) &  
    \begin{minipage}[t]{\functionwidth}
        \ \\[-1.5ex]
        $\texttt{SinQ}(x_1,x_2) = \sin\left(\frac{x_1^2 +
        x_2 ^2}{10}\right)$.       
\end{minipage} & $\approx 4.2$ & $6$ \\
    \bottomrule
\end{tabular}
   \caption{Benchmark problems defined using
       two-dimensional functions with $
       \mathcal{X}=[-10,10]^2$.  The true Lipschitz constant
       for the gradients $K_f$, when unknown, are computed
       via gridding.  See~\cite[Sec.
       B]{kochenderfer2019algorithms} for the definition of
   the Branin function. We use strong-convexity constant
   $\mu=0.5$ for \texttt{Bowl} function, which is smaller
   than its true strong-convexity constant of one.}
   \label{tab:prob_desc}
\end{table}

We consider several benchmark problems to compare the
performance of Algorithms~\ref{algo:my_cov_method}
and~\ref{algo:my_cov_method_mu_convex} with existing
approaches to solve \eqref{prob:orig_prob} --- bayesian
optimization and local optimization.  For Bayesian
optimization, we considered the constrained expected
improvement (\texttt{cEI})~\cite{gardner2014bayesian}
approach as implemented in
\texttt{emukit}~\cite{emukit2019}. \texttt{emukit} solves
the resulting unconstrained, non-convex, acquisition
optimization problem approximately using Limited-memory
Broyden-Fletcher-Goldfarb-Shanno algorithm via random
starts. We also considered \texttt{SLSQP}, a first-order
local optimization algorithm as implemented in Python's
\texttt{scipy} package~\cite{2020SciPy-NMeth}.  

We investigate the quality
of the computed solutions in terms of their global
suboptimality, the compute time, and the number of
infeasible queries. We also discuss the near-infeasibility
certificates computed by Algorithm~\ref{algo:my_cov_method},
and its ability to deal with moderately-dimensioned
problems.

\subsubsection{Solution quality}

Table~\ref{tab:prob_desc} lists the eight benchmark
problems in the form of \eqref{prob:orig_prob}. Here, we chose 
the set $ \mathcal{X}=[-10,10]^2$, the thresholds $\eta=0.1$
and $\delta=10^{-5}$, and a budget of $T=400$. 

For the Bayesian optimization, we
considered $5$ independent trials to account for the
stochastic behavior of the \texttt{emukit}'s implementation
of \texttt{cEI}.  We used grid search followed by a
``polishing step'' using local optimization \texttt{SLSQP}
to approximate the true global minimum of
\eqref{prob:orig_prob} $J(\xopt)$ by $J(x^\ast_g)$. For grid
search, we used a step size of $0.05$, which resulted in
$40,000$ oracle queries (excluding the queries in the
polishing step). 

\begin{figure}
    \centering
    \newcommand{\trimValuesTrue}{0 0 0 8}
    \newcommand{\trimValuesFalse}{0 25 0 0}
    \includegraphics[width=1\linewidth,Trim=\trimValuesFalse,clip]{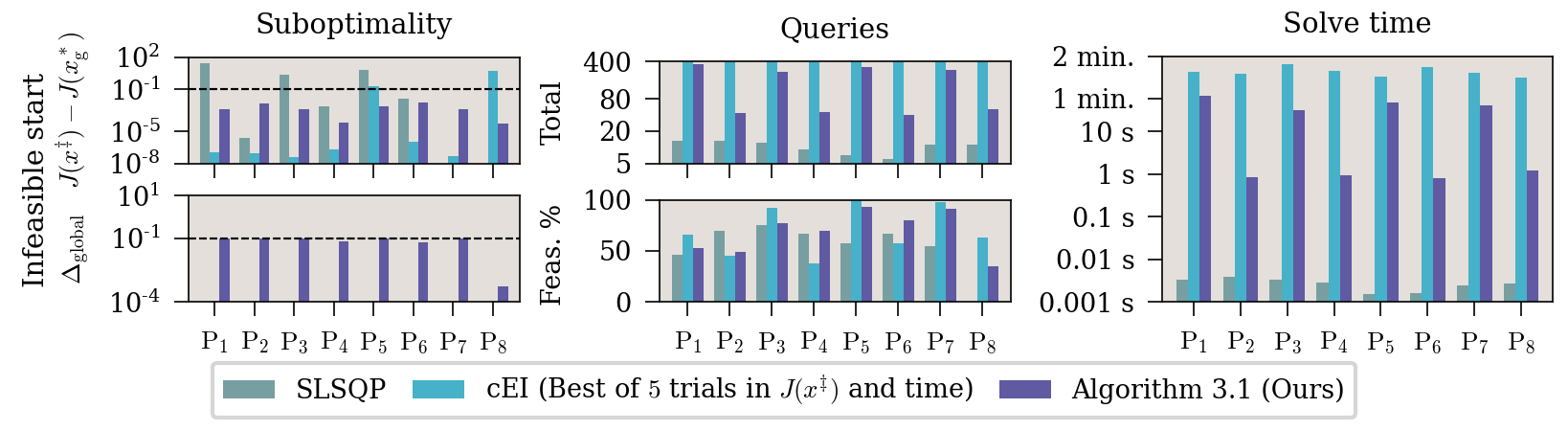} 
    \includegraphics[width=1\linewidth,Trim=\trimValuesTrue,clip]{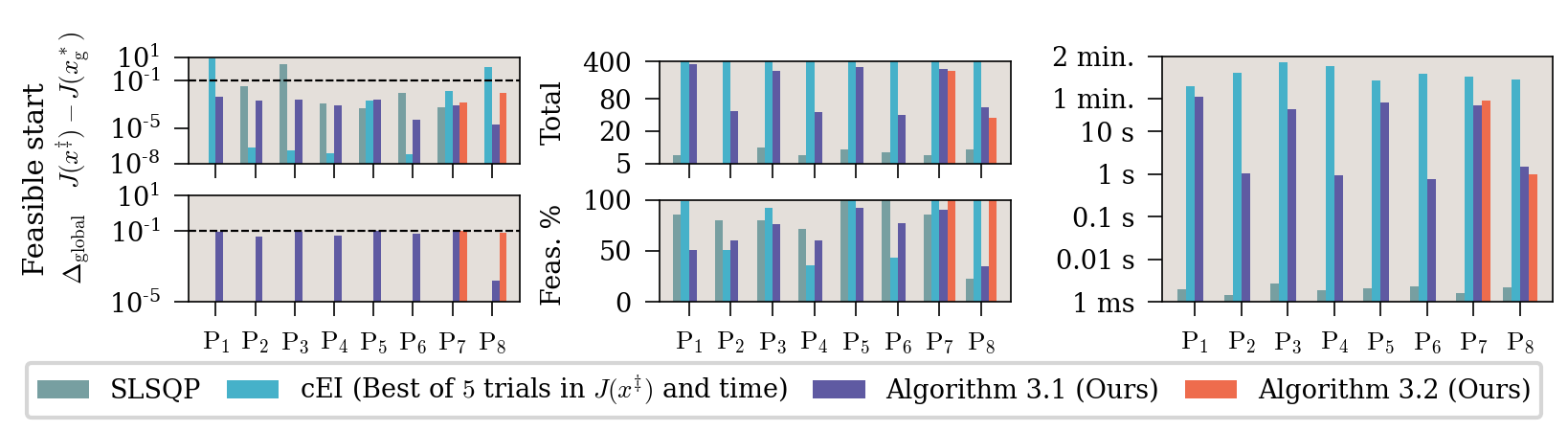} 
    \caption{
        Results of the benchmark problems
        (Table~\ref{tab:prob_desc}) for a budget $T=400$.
        Algorithm~\ref{algo:my_cov_method} computes a
        $(0.1,10^{-5})$-minimum for every problem,  
        irrespective of the feasibility of the initial
        solution guess. For Problems $P_7$ and $P_8$ with
        \emph{a priori} unknown, strongly-convex constraint $H\leq 0$, 
        Algorithm~\ref{algo:my_cov_method_mu_convex}
        computes a $(0.1,0)$-minimum without any constraint
        violation. Both of the algorithms return an upper
        bound $\Deltaglobal$ on the true global
        suboptimality of the computed solution, $J(x^\ddag)
        - J(\xopt_g)$. They outperform Bayesian optimization
        (\texttt{cEI}) in computation time, and recover a
    global minimum unlike \texttt{SLSQP}, a local
optimization method.}\label{fig:performance}
\end{figure}
\begin{figure}
    \centering
    \newcommand{\trimValuesRuntimeTrue}{0 0 0 0}
    \newcommand{\trimValuesRuntimeFalse}{0 31 0 0}
    \includegraphics[width=1\linewidth,Trim=\trimValuesRuntimeFalse,clip]{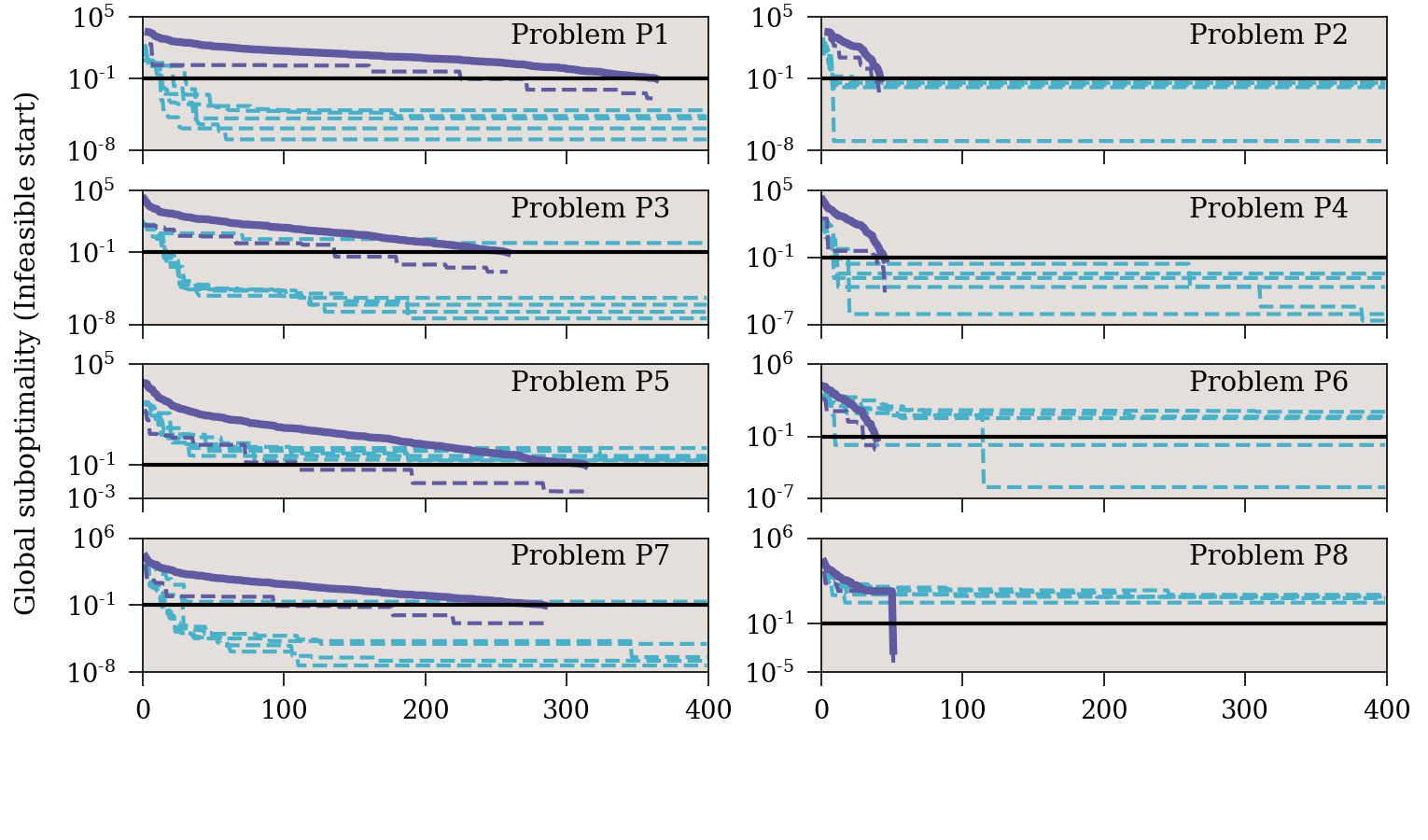}
    \par\noindent\rule{\textwidth}{0.4pt}
    \ \vspace*{-0.5em}\\
    \includegraphics[width=1\linewidth,Trim=\trimValuesRuntimeTrue,clip]{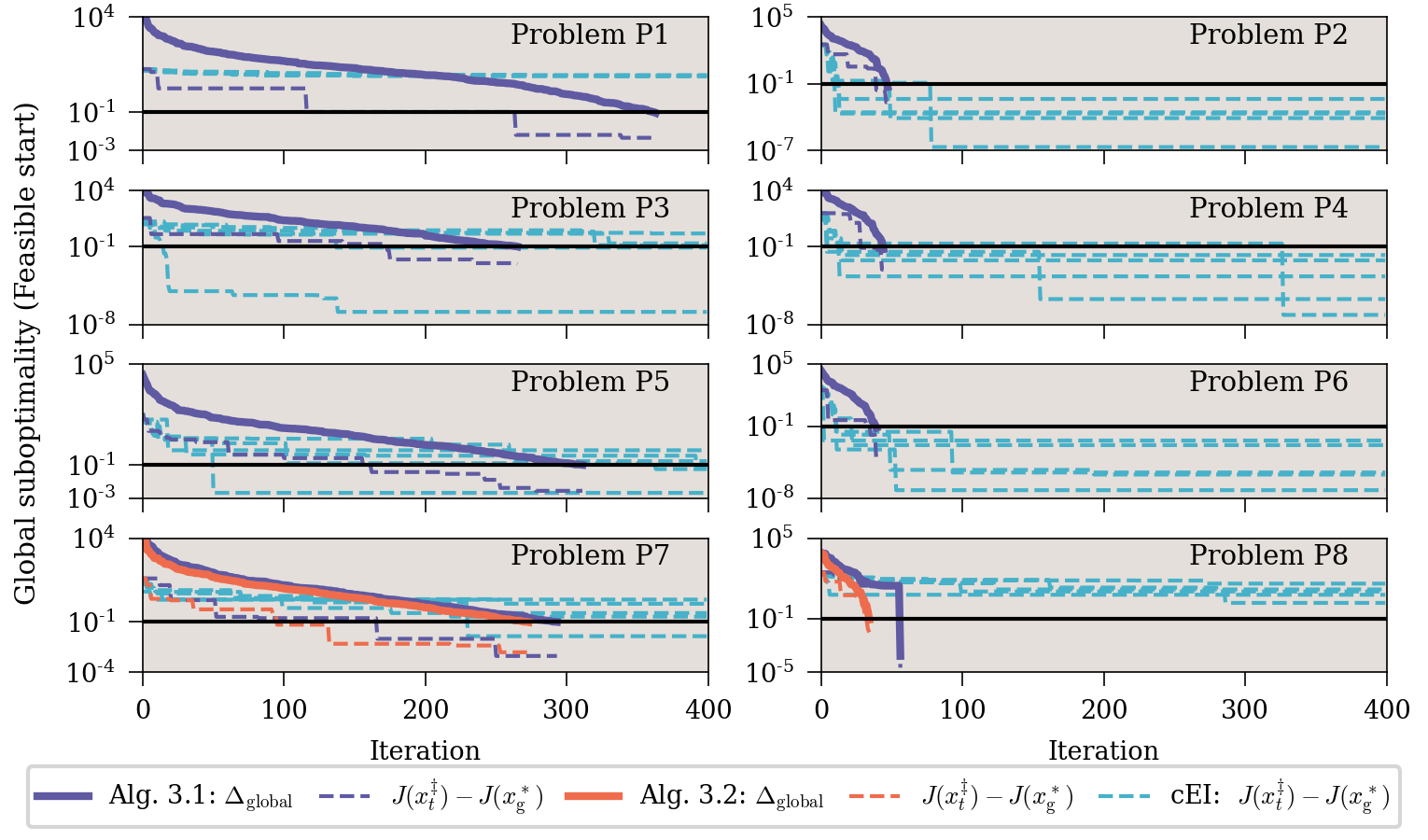}
    \caption{Global suboptimality of iterates returned by Algorithms~\ref{algo:my_cov_method}
        and~\ref{algo:my_cov_method_mu_convex}, and Bayesian
        optimization (\texttt{cEI}). Algorithms~\ref{algo:my_cov_method}
        and~\ref{algo:my_cov_method_mu_convex} provide a
        global suboptimality upper bound $\Deltaglobal$ at
        every iteration. In Problems $P_2$, $P_4$, $P_6$,
        and $P_8$, $\Deltaglobal$ enables early termination, resulting in
significant gains in computation time.}\label{fig:performance_runtime}
\end{figure}
\begin{figure}
    \centering
    \newcommand{\trimValuesScalability}{0 0 0 2}
    \includegraphics[width=1\linewidth,Trim=\trimValuesScalability,clip]{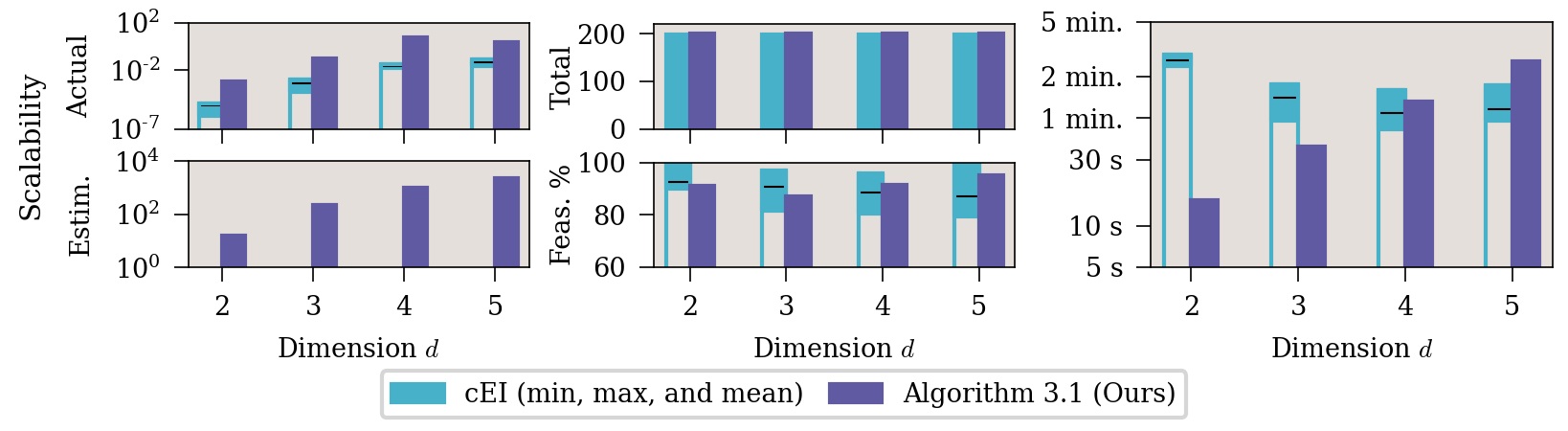}
    \caption{Algorithm~\ref{algo:my_cov_method} and
        \texttt{cEI} applied to moderately-high
    dimensional problems.}\label{fig:scalability}
\end{figure}

Figure~\ref{fig:performance} summarizes the results. We see 
that Algorithm~\ref{algo:my_cov_method} computes the
($\delta$-relaxed) global optima within the budget for every
problem in Table~\ref{tab:prob_desc}, while
Algorithm~\ref{algo:my_cov_method_mu_convex} computes the
global optima without any constraint violation whenever the
unknown constraint function $H$ is strongly-convex (Problems
$P_7$ and $P_8$). Both of the proposed algorithms are
significantly faster 
than Bayesian optimization (\texttt{cEI}) in most of the
trials. In addition,
Bayesian optimization currently lacks the guarantee
constraint-violation-free optimization when $H$ is
strongly-convex, and produces infeasible queries in contrast
to Algorithm~\ref{algo:my_cov_method_mu_convex}. As
expected, the local optimization method \texttt{SLSQP} can
return suboptimal solutions (Problems $P_1,P_3$, and $P_5$),
but converges significantly faster than
Algorithms~\ref{algo:my_cov_method}
and~\ref{algo:my_cov_method_mu_convex} and Bayesian
optimization. Unlike the proposed algorithms, \texttt{SLSQP}
can converge to an infeasible solution
(Problems $P_1,P_7$, and $P_8$).

Figure~\ref{fig:performance_runtime} compares the global
suboptimality of the intermediate iterates returned by
Algorithms~\ref{algo:my_cov_method}
and~\ref{algo:my_cov_method_mu_convex} and Bayesian
optimization, with respect to $J(\xopt_g)$. As expected, the global
suboptimality bound $\Deltaglobal$ computed by Algorithms~\ref{algo:my_cov_method}
and~\ref{algo:my_cov_method_mu_convex} at each iteration upper bounds the true
global suboptimality $J(x^\ddag_t) - J(\xopt_g)$, and demonstrates the anytime property of these
algorithms. Empirically, we see that the upper bound
$\Deltaglobal$ is not severely conservative, and tracks the
decrease in the true global suboptimality well. The upper
bound $\Deltaglobal$ helps
Algorithms~\ref{algo:my_cov_method}
and~\ref{algo:my_cov_method_mu_convex} terminate early in
Problems $P_2$, $P_4$, $P_6$, and $P_8$, while guaranteeing
the satisfaction of the desired global suboptimality
threshold of $\eta=0.1$. 
In most of the problems, the trials of the Bayesian
optimization approach (\texttt{cEI}) and the iterates of
Algorithms~\ref{algo:my_cov_method}
and~\ref{algo:my_cov_method_mu_convex} do not dominate each
other.

\subsubsection{(Near-)Infeasibility certificates}

Algorithm~\ref{algo:my_cov_method} can produce
infeasibility or near-infeasibility certificates for
infeasible instances of \eqref{prob:orig_prob}. To
illustrate the utility of such a certificate, consider
the following infeasible optimization problem in $x$,
\begin{align}
    \mathrm{minimize}\ J(x)= \mathrm{MBr}(x)\quad\mathrm{subject\
    to}\ H(x)=\mathrm{Br}(x)\leq 0\label{prob:infeas}.
\end{align}
The infeasibility proof of \eqref{prob:infeas} follows from
the fact that the global minimum value of
$\mathrm{Br}(x)$ is strictly positive~\cite[Sec.
B.3]{kochenderfer2019algorithms}, which implies that $\{
\mathrm{Br}(x)\leq 0\}=\emptyset$. We chose
$\mathcal{X}=[-10,10]^2$ and a budget of $T=400$. Using the
modification of Algorithm~\ref{algo:my_cov_method} given in
\eqref{eq:new_infeas_rule}, we found $\gamma=0.32$ after
exhausting the budget in $58.05$ seconds ($<1$ minute). In
other words, Algorithm~\ref{algo:my_cov_method} proves that 
\begin{align}
    \mathrm{minimize}\ J(x)= \mathrm{MBr}(x)\quad\mathrm{subject\
    to}\ H(x)=\mathrm{Br}(x)\leq -0.32,
\end{align}
is infeasible (Definition~\ref{defn:gamma_infeas}).  Without
the modification, the near-infeasibility certificate
returned by Algorithm~\ref{algo:my_cov_method} is much
higher $\gamma=256.98$ after exhausting the budget in
$97.33$ seconds ($<2$ minutes).

\subsubsection{Scalability}

For scalability evaluation for $d\in\{2,3,4,5\}$, we
considered the optimization problem \eqref{prob:orig_prob}
with $J$ as the $d$-dimensional Rosenbrock's function and
constraint $H=\texttt{InvBowl}$. Recall that the global
minimum $\xopt$ of $J$ over $ \mathcal{X}=[-10,10]^d$ is
$1_d$, a $d$-dimensional vector of
ones~\cite[Sec. B.6]{kochenderfer2019algorithms}. We set
constraint $H=\texttt{InvBowl}$ with $c_\mathrm{Bowl}$ as
the mid point of line joining the global minimum and the
initial point $q_1=[-10,-10,-10,\ldots]\in \mathbb{R}^d$
(one of the vertices of $ \mathcal{X}$). We chose
$R_\mathrm{Bowl}=0.4\|\xopt - q_1\|$ to ensure that the
selected initial point $q_1$ remains feasible. We chose
$L_J=60$ and $L_H=2$.

Figure~\ref{fig:scalability} shows that
Algorithm~\ref{algo:my_cov_method} can be applied to
\eqref{prob:orig_prob} with moderate values of $d$ as well.
While the actual suboptimality of the optimization problem
remains low, we found that the suboptimality bounds become
loose as $d$ increases, potentially due to
$\eta^\frac{-d}{2}$ dependence on the sufficient budget
(Theorem~\ref{thm:my_cov_method}\ref{thm:my_cov_method_mu_cvx_tsuff}).
In addition, the computational time of
Algorithm~\ref{algo:my_cov_method}
increases with $d$, potentially due to the reliance on
mixed-integer optimization to solve
\eqref{prob:relax_surrogate_prob}.

\subsection{Training a neural network with constraints} 
\label{sub:mountain}

Next, we apply Algorithm~\ref{algo:my_cov_method} to an
instance of \eqref{prob:orig_prob} arising from policy
optimization in machine learning. 
Specifically, we consider the policy optimization for the
classical mountain car problem~\cite{openaigym}, where we
train a policy neural network $\texttt{NN}(\theta)$ with
five network parameters.  We seek a policy that drives the
car to reach the top of the mountain within a predetermined
number of steps. 

\begin{wrapfigure}{r}{.4\textwidth}
    \definecolor{pydarkgreen}{RGB}{0, 100, 0}
    \definecolor{pydarkorange}{RGB}{255, 140, 0}
    \centering
    \vspace*{-3em}
    \includegraphics[width=0.99\linewidth]{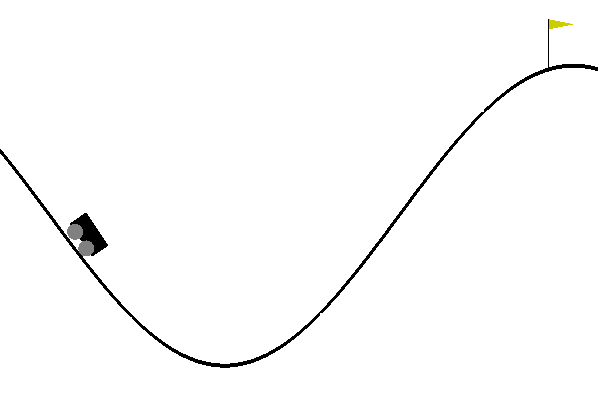}
    \begin{tikzpicture}[overlay,remember picture, >=stealth]
        \draw[thick, dashed, <->] (2.15, 0.3) --  (0, 0.3)
            node[midway, below, align=right] {\small $0.45$};
        \draw[thin] (2.15, 0.2) --  (2.15, 0.6);
        \draw[thick, dashed, <->] (2.15, 0.62) -- (2.15, 3.2)
        node[anchor=east,
        midway, above, rotate=90, align=right] {\small Goal
        height};

        \draw[thick] (-2.5, 0.62) -- (2.5, 0.62);
    
        \node[align=right, text width=13em, yshift=11.5em]
            (0, 0) {\small $\texttt{Height}(x)=0.45 \sin(3x)
            + 0.55$};

        \draw[thick, dashed, ->] (0, 3.8) -- (-1.65, 1.9);

        \draw[thin] (0, 0.2) --  (0, 0.6);
        \node[yshift=2.2em] (0, 0.6) {\small $0$};

        \draw[thin] (-1.8, 0.2) --  (-1.8, 2.5);
        \draw[thick, dashed, <->] (-1.8, 0.3) --  (0, 0.3)
            node[midway, below, align=right] {\scriptsize
            $\texttt{car\_position}$};
    \end{tikzpicture}
    \caption{Mountain car example.}
    \vspace*{-4em}
    \label{fig:mountaincar}
\end{wrapfigure}
 
\emph{Policy neural network:}
We consider a network with
two input nodes and one output nodes using $\tanh$
activation function. The network is defined as follows,
\begin{subequations}
\begin{align}
    y_1&=\tanh(\theta_1 \times \texttt{car\_position})\\
    y_2&=\tanh(\theta_2 \times \texttt{car\_velocity})\\
    u&=\tanh(\theta_3 y_1 + \theta_4 y_2 + \theta_5)
\end{align}\label{eq:neural_network}%
\end{subequations}%
Here, $\theta_i$ are the network parameters or the decision
variables. We fixed the domain of network parameters
$\Theta=\{z:-\theta_{\max} \leq z \leq
\theta_{\max}\}\subset \mathbb{R}^5$ with $\theta_{\max}=5
\times [\frac{1}{1.2}, \frac{1}{0.07},1,1,1]$.  Here, we
have normalized states (position and velocity of the
car) by their bounds.

\emph{Physics-driven constraints:} In general, policy
optimization is a hard
problem~\cite{sutton2018reinforcement}, since the mapping
from the policy parameters to the reward is highly
non-convex, and the large number of policy parameters
prevents tractable enumeration. A natural way to reduce the
search space is to enforce additional constraints on the
problem. 
Consider the following policy
optimization problem,
\begin{align}
    \begin{array}{cl}
        \underset{ \theta \in
        \Theta\subset
        \mathbb{R}^5}{\mathrm{max.}}&J=\overbrace{\texttt{CumulativeRewardOverAnEpisode}(\texttt{NN}(\theta))}^{\text{Unknown
    function of $\theta$}} \\
        \mathrm{s.\ t.}&
        H=\underbrace{\texttt{TotalEnergyAtEndOfEpisode}
    (\texttt{NN}(\theta))}_{\text{(Unknown function of
$\theta$)}} \geq
    \underbrace{\texttt{GoalPotentialEnergy}}_{\text{Known
constant}}.
    \end{array} \nonumber %
\end{align}
The motivation for imposing constraints on the total energy 
arise from the observation that successful policies that
drive the car to the top of the mountain should also inject
sufficient energy into the car. Here, we compute the total
energy of the system and the goal potential energy at the
end of the episode as follows with $g=9.8$,
\begin{align}
    &\texttt{TotalEnergyAtEndOfEpisode}
    (\texttt{NN}(\theta)) \nonumber \\
    &\qquad= \texttt{Potential energy} + \texttt{Kinetic
    energy} \nonumber \\
    &\qquad= g\times \texttt{Height}(\texttt{terminal\_car\_position}) +
    \frac{\texttt{terminal\_car\_velocity}^2}{2}\nonumber \\
    &\texttt{GoalPotentialEnergy} = g\times
    \texttt{Height}(\texttt{goal\_position\_x}) \nonumber %
\end{align}
We can safely ignore the mass of the car since it appears on
both sides of the constraint. We declare that the task is
completed successfully, when the cumulative reward is above
$90$~\cite{openaigym}.

Note that $J$ and $H$ are smooth functions of the policy
parameters $\theta$, since we have used \texttt{tanh} as the
activation function in \eqref{eq:neural_network}. We compute
the gradients $\nabla J$ and $\nabla H$ via finite
differences (step size of $0.01$) and choose sufficiently
large $L_J=L_H=100$. We chose a budget of $T=10$, which
translates to $110$ episodes for finite difference-based
gradient computation.

We found that Algorithm~\ref{algo:my_cov_method} computed a
policy neural network completes the task successfully. On
the other hand, when the energy constraints were not
imposed, we did not meet the minimum reward threshold for
success, possibly due to the low number of episodes. 

\section{Conclusion}
\label{sec:conc}

This paper introduces two novel algorithms for constrained
global optimization of \emph{a priori unknown} functions
with Lipschitz continuous gradients. The proposed approaches
are inspired by the existing literature in covering method
to global optimization problems. They accommodate
finite budget of oracle calls and terminate with non-trival
global suboptimality guarantees. The first approach
accommodates infeasible start and returns near-global
minimum or a (near-)infeasibility certificate. 
The second approach guarantees
feasible iterates when the unknown constraint function is
strongly-convex and the initial solution guess is feasible.
We also characterize the necessary and sufficient budget of
oracle calls required to  satisfy user-specified tolerances
for a large class of optimization problems. Empirical
studies show the efficacy of these approaches.

\appendix

\section{Adversarial instance of $H$ in
Section~\ref{sub:seqoptalg}} 
\label{app:adversarial_example}
We construct a \emph{resistive} oracle for the constraint
functions to meet the requirements specified in
Section~\ref{sub:seqoptalg}, and it sufficies to consider a
single-constraint case $M=1$. Resistive oracles for
sequential optimization algorithms do not commit to a
specific $H$, but adapt based on the queries. Analyzing the
algorithm's performance under such oracles reveals its
worst-case performance. See~\cite{nesterov_lectures_2018}
for more details.

\textsc{Desirable properties of the resistive oracle for
$H$}: Given $T\in \Npos$,  $L_H > 0$, and $T$ oracle queries
arising from \emph{any} sequential optimization algorithm,
we can construct a first-order oracle for some $H\in \FH$
such that: 
\begin{enumerate}
    \item all of the $T$ oracle calls 
        returns $H > 0$ and $\nabla H=0$, i.e., all of the
        query points requested by a sequential optimization
        algorithm are infeasible for \eqref{prob:orig_prob},
        and
    \item there exists $y\in \mathcal{X}$ distinct from the
        $T$ query points such that $y$ is feasible for
        \eqref{prob:orig_prob}.
\end{enumerate}
In other words, given $T$ and $L_H$, the constructed oracle
responds to the queries of any sequential optimization
algorithm such that the algorithm can ``discover" the
feasibility of the constrained optimization problem
\eqref{prob:orig_prob}, only at the ${(T+1)}^\mathrm{th}$
query. Since every algorithm is bound by the budget of the
oracle calls, it is forced to declare infeasibility based on
the infeasible $T$ queries. 

\textsc{Construction of the resistive oracle for $H\in\FH$}:
Let $\mathcal{Q}_T \triangleq \{q_i: i\in \Nint{T}\}$ be the
set of query points corresponding to the first $T$ oracle
calls from the sequential algorithm under study. We define
$y\in\arg\sup_{x \in \mathcal{X}} \min_{i\in\Nint{T}} \| x -
q\| \in \mathcal{X}$, a point in $\mathcal{X}$ that is the
furthest away from $\mathcal{Q}_T$. By Whitney's extension
theorem~\cite{whitney1934analytic}, there is always a
function $h: \mathcal{X} \to \mathbb{R}$ with Lipschitz
continuous gradient, such that $h(q_i)>0$ and $\nabla
h(q_i)=0$ for $i\in\Nint{T}$, and $h(y)\leq 0$. While the
constructed $h$ need not lie in $\FH$ as desired, we can
always construct the desired $H\in\FH$ via $H=\alpha h$ for
some appropriate scaling $\alpha > 0$. This completes the
construction.

\section{Illustrative example on page \pageref{tab:illustration}}
\label{app:example}
For the first example, we study the following non-convex
optimization problem,
\begin{align}
    \begin{array}{rl}
    \mathrm{minimize}&\quad J(x)=\frac{\sin(x)}{2x} - 0.02x\\
    \mathrm{subject\ to}&\quad x\in
    \mathcal{X}=[-10,10],\quad H(x)=\frac{(x-6)^2(x+6)^2 -
    900}{4000}\leq 0\\
    \end{array}\label{prob:safeopt_wontwork}%
\end{align}
with Lipschitz gradient constants as $L_J\in\{0.2,1\}$ and
$L_H=0.2$. We choose suboptimality threshold $\eta=0.01$,
and relaxation threshold $\delta=10^{-8}$.

For the second example, we study the following non-convex
optimization problem with strongly-convex constraint
function $H$,
\begin{align}
    \begin{array}{rl}
    \mathrm{minimize}&\quad J(x)=\frac{\sin(x)}{2x} - 0.02x\\
    \mathrm{subject\ to}&\quad x\in
\mathcal{X}=[-10,10],\quad H(x)=\frac{(x-1)^2 - 7^2}{100}\leq 0\\
    \end{array}\label{prob:safeopt_wontwork_2}%
\end{align}
with Lipschitz gradient constants as
$L_J=0.2$ and $L_H=1.2$, and convexity constant
$\mu=0.01$. We choose suboptimality threshold $\eta=0.01$.

\end{document}